\definecolor{Cobalt}{rgb}{0.25,0.41,0.88}
\newtheorem{thm}{Theorem}[section]
\newtheorem{lem}{Lemma}[section]
\numberwithin{equation}{section}
\begin{document}
\title{Numerical simulation of the dual-phase-lag heat conduction equation on a one-dimensional unbounded domain using artificial boundary condition}

\author{Weiping Bu$^{1,2}$\footnote{Correspondence author. Email: weipingbu@xtu.edu.cn},\quad Zhengfang Xie$^1$,\quad Yushi Wang$^1$
\\{\small $^1$ School of Mathematics and Computational Science, Xiangtan University, }
\\{\small Hunan 411105, China}
\\{\small $^2$ Hunan Key Laboratory for Computation and Simulation in Science and}
\\{\small Engineering, Hunan 411105, China}
}

\date{}
\maketitle
\begin{abstract}
This paper focuses on the numerical solution of a dual-phase-lag heat conduction equation on a space unbounded domain. First, based on the Laplace transform and the Pad\'e approximation,
a high-order local artificial boundary condition is constructed for the considered  problem, which effectively transforms the original problem into an initial-boundary value problem on a bounded computational domain. Subsequently, for the resulting reduced problem on the bounded domain equipped with high-order local artificial boundary, a stability result based on the $L^2$-norm is derived. Next, we develop finite difference method for the reduced problem by introducing auxiliary variable to reduce the order of time derivative. The numerical analysis demonstrates that the developed numerical scheme is unconditionally stable and possesses a second-order convergence rate in both space and time. Finally, numerical results are presented to validate the effectiveness of the proposed numerical method and the correctness of the theoretical analysis.
\\[2ex]
\textbf{AMS subject classification:} 65M06, 65M12, 65M60.\\[2ex]
\textbf{Keywords:} Dual-phase-lag heat conduction equation; unbounded domain; finite difference method; stability and convergence; artificial boundary condition
\end{abstract}


 \vskip 5mm
\section{Introduction}\label{Sec1}
In classical heat conduction theory, Fourier's Law has been successfully and widely applied to describe the relationship between heat flux and temperature gradient. However, this law will yield an infinite speed of heat propagation,
which is generally inconsistent with physical reality. To overcome this inherent drawback, various alternative models have been proposed. For instance, Cattaneo \cite{Cattaneo1958form} and Vernotte \cite{Vernotte1958Les,Vernotte1961Some} propose a modified Fourier's Law by introducing a first-order time derivative of the heat flux.
Subsequently, Tzou \cite{Tzou1989Shock,Tzou1990Thermalb} extends this result and develops the single-phase-lag (SPL) heat conduction model.

Since the single-phase-lag heat conduction model only accounts for heat flux lag, to more accurately describe micro-scale heat transfer phenomena, Tzou \cite{Tzou2014Macro-to} not only considers the lag of heat flux but also that of the temperature gradient, further extending  into the following dual-phase-lag (DPL) heat conduction model
\begin{equation}\label{eqs1_2}
q(x, t + \tau_q) = -k \nabla T(x, t + \tau_T),
\end{equation}
where $q$ is heat flux, $k>0$ is thermal condutivity, $T$ is the temperature, $\nabla$ is the gradient operator, and $\tau_q>0, \tau_T>0$ are the phase lags of
the heat flux vector and the temperature gradient, respectively.
What is more, a Taylor's expansion of (\ref{eqs1_2}) readily yields the following DPL heat conduction equation \cite{Cheng2008Single-and}
$$\frac{\partial T(x, t)}{\partial t} + \tau_q \frac{\partial^2 T(x, t)}{\partial t^2} = \alpha \Delta T(x, t) + \tau_T \alpha \frac{\partial}{\partial t} (\Delta T(x, t)).$$
To date, this model has been widely applied in various fields including bioheat transfer, ultrafast laser heating, micro/nano-scale heat conduction, and thermal damage in biological tissues \cite{Xu2015Time,Shomali2022Lagging,Ghazanfarian2015Macro-to,Ghazanfarian2009Effect,Hobiny2020Nonlinear,Qiao2021Numerical,Lee2013Analysis}.
In this paper, for convenience, we denote
$\frac{\partial T}{\partial t}, \frac{\partial^2 T}{\partial t^2}, \frac{\partial^2 T}{\partial x^2}$ and $ \frac{\partial}{\partial t}\left(\frac{\partial^2 T}{\partial x^2}\right)$ as
$T_t, T_{tt}, T_{xx}$ and $T_{txx}$, respectively. We then proceed to discuss the following dimensionless DPL heat conduction equation on a one-dimensional semi-infinite domain
\begin{equation}
\left\{ \begin{aligned}
&T_t(x,t)+aT_{tt}(x,t)=K(T_{xx}(x,t)+bT_{txx}(x,t))+f(x,t),
(x,t)\in\Omega\times(0,D],\\
&T(x,0)=\xi(x),x\in\Omega,\\
&T_t(x,0)=\eta(x),x\in\Omega,\\
&T(0,t)=\phi(t),t\in(0,D],\\
&T(x,t)\rightarrow0,x\rightarrow+\infty,t\in(0,D],
\end{aligned}\right.\label{eqs1_3}
\end{equation}
where $\Omega:=\{x|0\leq x<+\infty\},f(x,t),\xi(x),\eta(x),\phi(t)$ are some given continuous functions, $a,b,K$ are some given positive constants. Let $x_r>0$ be a constant. Here, we suppose that the support of $f(x,t)$ is $\Omega_{2}\times[0,D]$ with $\Omega_2:=[0,x_r],$
and the supports of
$\xi(x)$ and $\eta(x)$ are on the domain $\Omega_{2}$.

Recently, while analytical solutions for DPL heat conduction equations have been explored such as \cite{Wang2001Well-posedness,Noroozi2016new,Julius2018Nonhomogeneous,Youssef2020exact}, their inherent complexity, often involving sophisticated special functions, precludes their direct practical implementation. Therefore, considerable research efforts have been dedicated to developing numerical approaches for these equations.
In \cite{Cabrera2013Difference}, Cabrera et al. employed the Crank-Nicolson finite difference method to solve two types of DPL models, and analyzed the stability and convergence of the numerical scheme. By employing finite difference and finite element wavelet Galerkin methods to solve DPL bioheat transfer models, Kumar et al. \cite{Kumar2016Numerical} theoretically investigated the thermal behavior of living biological tissues during thermal therapy under various coordinate systems and non-Fourier boundary conditions.
In \cite{Majchrzak2020Numerical}, Majchrzak and Mochnacki
proposed both explicit and implicit finite difference methods for solving a class of second-order DPL heat conduction model. By using the weighted shifted Gr\"unwald difference formula in the temporal direction and the Legendre spectral method in the spatial direction, Zheng et al. \cite{Zheng2020Efficient} solved a class of time-fractional DPL heat conduction models, and discussed the stability and convergence of the numerical scheme.
In \cite{Wang2020Analytical}, Wang et al. utilized the L1/finite difference method to obtain numerical solution for a class of Caputo fractional DPL heat conduction models during short-pulse laser heating, and investigated the solvability, stability, and convergence of the numerical scheme.
In \cite{Kumar2025Computational}, Kumar and Meena solved SPL and DPL models by adopting Gaussian radial basis functions and Chebyshev polynomials. In \cite{Ansari2025Numerical}, Ansari et al. obtained numerical solution for a class of DPL nonlinear bioheat transfer models by employing finite difference method in the spatial direction and Runge-Kutta method in the temporal direction.

For the numerical solution of partial differential equations (PDEs) on unbounded domains, early research commonly employs Dirichlet or Neumann boundary conditions on a finite domain directly as substitutes for conditions at infinity. However, these traditional boundary conditions are inherently only crude approximations of the exact boundary conditions. Consequently, the construction of exact or highly accurate approximate artificial boundary conditions (ABCs) has emerged as a crucial problem for solving PDEs on unbounded domains \cite{Han2013Artificial}.
Up to now, several studies have numerically solved the heat equation and diffusion equation by using artificial boundary method (ABM).
In \cite{Han2002class}, Han and Huang obtained exact ABCs for the heat conduction equation on unbounded domains, and solved the reduced problem with finite difference and finite element methods.
In \cite{Wu2004Convergence}, Wu and Sun employed ABM and finite difference method to solve the heat conduction equation on a two-dimensional unbounded domain, and discussed the unconditional stability and convergence of the numerical scheme.
Antoine et al. \cite{Antoine2008review} provided a systematic review of ABM for solving linear and nonlinear Schr\"odinger equations on unbounded domains. For one-dimensional unbounded fractional diffusion equations, Gao et al. \cite{Gao2012finite} obtained numerical solution by using ABM combined with the L1/finite difference method, and established the stability and convergence theory for the proposed numerical scheme.
For time-fractional diffusion-wave equations on two- and three-dimensional unbounded spatial domains, Brunner et al. \cite{Brunner2014Artificial} derived ABCs of the considered problem, obtained numerical solution, and developed the associated numerical theory.
In \cite{Li2014Local}, Li et al. constructed high-order local ABCs for the Schr\"odinger and heat equations. Subsequently, they solved these reduced problems by using finite difference method.
For nonlinear time-fractional parabolic equations, Li and Zhang \cite{Li2016Efficient} derived nonlinear absorbing boundary conditions, and solved the reduced nonlinear problem by using a linearized finite difference method.
By utilizing the Pad\'e approximation, Zhang et al. \cite{Zhang2017Artificial} constructed high-order local ABCs for a class of non-local heat equations and performed numerical solution.
Based on the Pad\'e approximation, Dong et al. \cite{Dong2020High-order} constructed high-order local ABCs for a class of fractional diffusion-wave equations, and used the L1/finite difference method to solve the reduced problem. By employing the Z-transform, Sun et al. \cite{Sun2020Fast} constructed ABCs for one-dimensional convection-diffusion equations on unbounded domains and performed numerical solution.
By introducing fractional derivative, Li et al. \cite{Li2022Fast} derived exact ABCs for one-dimensional reaction-convection-diffusion equations on unbounded domain, and developed an unconditionally stable fast finite difference scheme.
In \cite{Liu2023Analysis}, Liu et al. constructed exact absorbing boundary conditions for a fractional anomalous diffusion model in a comb structure on unbounded domains, and proposed corresponding finite difference method.
In \cite{Zhu2024highly}, Zhu and Xu developed highly efficient numerical method for the time
fractional diffusion equation on unbounded domains by using ABCs, L2 formula and sum-of-exponentials technique.
For a time fractional anomalous diffusion equations on unbounded domains, Su et al. \cite{Su2025Numerical} devised absorbing boundary conditions, and established stable and convergent numerical scheme.

It can be seen from the above discussion that current numerical studies on the DPL heat conduction equations primarily focus on bounded domains, while research on unbounded domains is almost nonexistent. In view of that ABM is an efficient and well-established approach for solving PDEs on unbounded domains, this motivates us to employ ABM to solve the DPL heat conduction equation (\ref{eqs1_3}) on unbounded domain. The main contribution of this paper includes:

$\bullet$ Based on the Laplace transform and the Pad\'e approximation, a high-order local artificial
boundary for problem (\ref{eqs1_3}) is derived.

$\bullet$ For the reduced problem of (\ref{eqs1_3}) with the high-order local artificial boundary, its stability is discussed and a priori estimate under the $L^2$-norm is obtained.

$\bullet$ By introducing auxiliary variable to reduce the order of the time derivative of the reduced problem, we construct finite difference scheme for the reduced problem, and discuss the stability and convergence of the developed numerical scheme.

$\bullet$ Some numerical tests are conducted to validate the correctness of the obtained numerical theory.

The structure of this paper is as follows. In Section 2, we derive ABCs for problem (\ref{eqs1_3}). In Section 3, we investigate the stability of the associated reduced problem. In Section 4, finite difference scheme of the reduced problem is developed, and the corresponding stability and convergence theory is established. In Section 5, we present several numerical tests to validate the theoretical results. Finally, a summary is provided in Section 6.

\section{ABCs for the problem (\ref{eqs1_3})}\label{Sec2}

In this section, we devise the ABCs for the considered problem (\ref{eqs1_3}) and investigate the stability of the developed reduced problem.
In order to transform the initial-boundary value problem (\ref{eqs1_3}) on a semi-infinite domain into a
DPL heat equation on a finite computational domain, we
employ ABM.
The artificial boundary can be taken as $\Gamma=\{(x,t)|x=x_r,0\leq t\leq D\},$ which divides the semi-infinite domain $\Omega$ into a bounded domain $\Omega_2:=[0,x_r]$ and an unbounded domain $\Omega_1:=(x_r,+\infty)$.

\subsection{Construction of ABCs}
Now we consider the problem on the unbounded domain $\Omega_1\times(0,D]$ as follows
\begin{subequations}
\begin{numcases}{}
T_t(x,t)+aT_{tt}(x,t)=K(T_{xx}(x,t)+bT_{txx}(x,t)),(x,t)\in\Omega_1\times(0,D],\label{eqs2_2.1a}\\
T(x,0)=0,x\in\Omega_1,\label{eqs2_2.1b}\\
T_t(x,0)=0,x\in\Omega_1,\label{eqs2_2.1c}\\
T(x,t)\rightarrow0,x\rightarrow+\infty,t\in(0,D].\label{eqs2_2.1d}
\end{numcases}
\end{subequations}
Applying the Laplace transform to (\ref{eqs2_2.1a}) yields
$$s\hat{T}(x,s)+as^2\hat{T}(x,s)=K(\hat{T}_{xx}(x,s)+bs\hat{T}_{xx}(x,s)),$$
i.e.,
\begin{equation}
(s+as^2)\hat{T}(x,s)=K(1+bs)\hat{T}_{xx}(x,s),\label{eqs2_2}
\end{equation}
where $\hat{T}(x,s)$ denotes the Laplace transform of $T(x,t)$.
By solving the second-order ordinary differential equation (\ref{eqs2_2}) with respect to
$x$, and noting that  (\ref{eqs2_2.1d}) implies $|\hat{T}(x,s)|<+\infty, x\rightarrow+\infty$, we have
\begin{equation}
\hat{T}(x,s)=c_1(s)e^{-\sqrt{\frac{s+as^2}{K(1+bs)}}x},x\in(x_r,+\infty).\label{eqs2_3}
\end{equation}
Differentiating both sides of (\ref{eqs2_3}) with respect to $x$ gives
\begin{equation}
(1+bs)\hat{T}_x(x,s)+\sqrt{\frac{(1+bs)(1+as)}{s}}\frac{s}{\sqrt{K}}\hat{T}(x,s)=0.\label{eqs2_4}
\end{equation}
Let $z=\frac{(1+bs)(1+as)}{s}$. By using the Pad\'e expansion
$$
\sqrt{z} \approx \sqrt{z_0}\left(1-\sum_{n=1}^N \frac{a_n\left(1-z / z_0\right)}{1-b_n\left(1-z / z_0\right)}\right)
$$
to replace $\sqrt{z}$ in (\ref{eqs2_4}), we obtain
\begin{equation}
(1+bs)\hat{T}_x(x,s)+\sqrt{z_0}\left(1-\sum_{n=1}^N \frac{a_n\left(z_0-z\right)}{z_0-b_n\left(z_0-z\right)}\right)\frac{s}{\sqrt{K}}\hat{T}(x,s)=0,\label{eqs2_5}
\end{equation}
where the parameter $z_0>0$ is the Pad\'e expansion point, and
$$
b_n=\cos ^2\left(\frac{n \pi}{2 N+1}\right), a_n=\frac{2}{2 N+1} \sin ^2\left(\frac{n \pi}{2 N+1}\right).
$$
By introducing the auxiliary function
\begin{equation}\label{eqs_250924A}
\hat{\zeta}_n( s)=\frac{1}{z_0-b_n\left(z_0-z\right)}  \hat{T}\left(x_r, s\right),
\end{equation}
then, when $x \rightarrow x_r$, (\ref{eqs2_5}) can be equivalently expressed as
\begin{equation}
\left\{ \begin{aligned}
&(1+bs)\hat{T}_x(x_r,s)+\sqrt{\frac{z_0}{K}}\left[\left(1+\sum_{n=1}^N \frac{a_n}{b_n}\right) s\hat{T}(x_r,s)-z_0 \sum_{n=1}^N \frac{a_n}{b_n} s\hat{\zeta}_n( s)\right]=0, \\
&\left(z_0-b_n z_0+b_n z\right) \hat{\zeta}_n( s)=\hat{T}(x_r,s).
\end{aligned}\right.\label{eqs2_6}
\end{equation}
Applying the inverse Laplace transform to (\ref{eqs2_6}) with $z=\frac{(1+bs)(1+as)}{s}$, we obtain
\begin{equation}
\left\{ \begin{aligned}
&T_x(x_r,t)+bT_{tx}(x_r,t)+\sqrt{\frac{z_0}{K}}\left[\left(1+\sum_{n=1}^N \frac{a_n}{b_n}\right) T_t(x_r,t)-z_0 \sum_{n=1}^N \frac{a_n}{b_n} \zeta^{'}_n( t)\right]=0, \\
&(z_0-b_n z_0)\zeta^{'}_n(t)+b_n[\zeta_n(t)+(a+b)\zeta_n^{'}(t)+ab\zeta^{''}_n(t)]=T_t(x_r,t),t\in(0,D].
\end{aligned}\right.\label{eqs2_7}
\end{equation}
Since
\begin{equation}\label{eqs_250924B}
s\hat{\zeta}_n( s)=\frac{1}{z_0-b_n\left(z_0-z\right)}  s\hat{T}\left(x_r, s\right),
\end{equation}
and  the initial values  $\mathrm{supp}\{\xi(x)\}\subset\Omega_{2},\mathrm{supp}\{\eta(x)\}\subset\Omega_{2}$ implies $T(x_r,0)=T_t(x_r,0)=0,$
it follows from the inverse Laplace transform, (\ref{eqs_250924A}) and (\ref{eqs_250924B}) that
\begin{equation}
\zeta_n(0)=\zeta^{'}_n(0)=0,(n=1,\dots,N).\label{eqs2_8}
\end{equation}
By combining (\ref{eqs2_7}) with (\ref{eqs2_8}), the problem (\ref{eqs1_3}) can be transformed into the following reduced problem
\begin{equation}
\left\{ \begin{aligned}
&T_t(x,t)+aT_{tt}(x,t)=K(T_{xx}(x,t)+bT_{txx}(x,t))+f(x,t),(x,t)\in\Omega_2\times(0,D],\\
&T(x,0)=\xi(x),x\in\Omega_2,\\
&T_t(x,0)=\eta(x),x\in\Omega_2,\\
&T(0,t)=\phi(t),t\in(0,D],\\
&T_x(x_r,t)+bT_{tx}(x_r,t)+\sqrt{\frac{z_0}{K}}\left[\left(1+\sum_{n=1}^N \frac{a_n}{b_n}\right) T_t(x_r,t)-z_0 \sum_{n=1}^N \frac{a_n}{b_n} \zeta^{'}_n( t)\right]=0, \\
&(z_0-b_n z_0)\zeta^{'}_n(t)+b_n[\zeta_n(t)+(a+b)\zeta_n^{'}(t)+ab\zeta^{''}_n(t)]=T_t(x_r,t),t\in(0,D],\\
&\zeta_n(0)=\zeta^{'}_n(0)=0,(n=1,\dots,N).
\end{aligned}\right.\label{eqs2_9}
\end{equation}
In order to facilitate the subsequent discussion, we let $\tilde{T}(x,t)=T(x,t)-\phi(t)$ to make sure $\tilde{T}(0,t)=0$. Then (\ref{eqs2_9}) can be  equivalently rewritten into (for simplicity, we omit  '$\tilde{\ }$', and still denote $\tilde{T}$ by $T$)
\begin{subequations}\label{eqn-250925A}
\begin{numcases}{}
T_t(x,t)+aT_{tt}(x,t)=K(T_{xx}(x,t)+bT_{txx}(x,t))+F(x,t),(x,t)\in\Omega_2\times(0,D],\label{eqs2_2.10a}\\
T(x,0)=\xi_1(x),x\in\Omega_2,\label{eqs2_2.10b}\\
T_t(x,0)=\eta_1(x),x\in\Omega_2,\label{eqs2_2.10c}\\
T(0,t)=0,t\in(0,D],\label{eqs2_2.10d}\\
T_x(x_r,t)+bT_{tx}(x_r,t)+\sqrt{\frac{z_0}{K}}\left[\left(1+\sum_{n=1}^N \frac{a_n}{b_n}\right) T_t(x_r,t)-z_0 \sum_{n=1}^N \frac{a_n}{b_n} \zeta^{'}_n( t)\right]=Q(t),\label{eqs2_2.10e}\\
(z_0-b_n z_0)\zeta^{'}_n(t)+b_n[\zeta_n(t)+(a+b)\zeta_n^{'}(t)+ab\zeta^{''}_n(t)]=T_t(x_r,t)+R(t),t\in(0,D],\label{eqs2_2.10f}\\
\zeta_n(0)=\zeta^{'}_n(0)=0,(n=1,\dots,N),\label{eqs2_2.10g}
\end{numcases}
\end{subequations}
where $F(x,t)=f(x,t)-\phi_t(t)-a\phi_{tt}(t),\xi_1(x)=\xi(x)-\phi(0)$$,\eta_1(x)=\eta(x)-\phi_t(0),Q(t)=-\sqrt{\frac{z_0}{K}}(1+\sum_{n=1}^N \frac{a_n}{b_n})\phi_t(t),$ $R(t)=\phi_t(t).$
Due to this equivalence, in the following sections, we will discuss the stability of the reduced problem (\ref{eqn-250925A}), propose corresponding numerical scheme, and establish the associated numerical theory.

\subsection{Stability of the reduced problem}
In this subsection, we investigate the stability of the reduced DPL problem (\ref{eqn-250925A}). Define the inner product and $L^2$-norm by
$$ (u,v)=\int_{0}^{x_r}u(x)v(x)dx,\quad\|u\|_{L^2(\Omega_2)}=(u,u)^{1/2}.$$
Now we state the stability of the solution to (\ref{eqn-250925A}).

\begin{thm}
Let $T(x,t)$ be the solution to (\ref{eqn-250925A}). Then
\begin{equation}
\begin{aligned}
\|T(x,t)\|^2_{L^2(\Omega_2)}
\leq&\frac{x_r^2K^{\frac{1}{2}}(2N+1)}{4\sqrt{z_0}}\int^t_0Q^2(s)ds+\frac{x_r^2z_0^{\frac{3}{2}}}{4\sqrt{K}(a+b)}\int_0^t\sum_{n=1}^N \frac{a_n}{b^2_n} R^2(s)ds+\frac{x_r^2}{2}\left\|\frac{d\xi_1(x)}{dx}\right\|^2_{L^2(\Omega_2)}
\nonumber\\
&+\frac{x_r^2}{2K}\int_0^t\|F(x,s)\|^2_{L^2(\Omega_2)}ds+\frac{ax_r^2}{2K}\|\eta_1(x)\|^2_{L^2(\Omega_2)}.
\end{aligned}
\end{equation}
\end{thm}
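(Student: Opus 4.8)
The plan is to establish the bound by an $L^2$ energy argument; throughout write $\|\cdot\|$ for $\|\cdot\|_{L^2(\Omega_2)}$. First I would multiply the interior equation \eqref{eqs2_2.10a} by $T_t$ and integrate over $\Omega_2=[0,x_r]$. Here $a\int_0^{x_r}T_{tt}T_t\,dx=\tfrac{a}{2}\tfrac{d}{dt}\|T_t\|^2$, while integrating the elliptic part $K\int_0^{x_r}(T_{xx}+bT_{txx})T_t\,dx$ by parts in $x$ produces $-\tfrac{K}{2}\tfrac{d}{dt}\|T_x\|^2-Kb\|T_{tx}\|^2$ together with the boundary contribution $KT_t(x_r,t)\big(T_x(x_r,t)+bT_{tx}(x_r,t)\big)$, the boundary term at $x=0$ dropping because $T(0,t)=0$ forces $T_t(0,t)=0$. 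This gives
\begin{equation*}
\frac{d}{dt}\Big(\frac{a}{2}\|T_t\|^2+\frac{K}{2}\|T_x\|^2\Big)+\|T_t\|^2+Kb\|T_{tx}\|^2
=KT_t(x_r,t)\big(T_x(x_r,t)+bT_{tx}(x_r,t)\big)+(F,T_t).
\end{equation*}

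Next I would substitute the artificial boundary condition \eqref{eqs2_2.10e} for $T_x(x_r,t)+bT_{tx}(x_r,t)$. Using the Pad\'e identity $1+\sum_{n=1}^N a_n/b_n=2N+1$, the boundary term splits into the genuinely dissipative piece $-\sqrt{Kz_0}(2N+1)T_t(x_r,t)^2$, the forcing piece $KQ(t)T_t(x_r,t)$, and the coupling $\sqrt{Kz_0}\,z_0\sum_n (a_n/b_n)T_t(x_r,t)\zeta_n'(t)$. To absorb the coupling I would multiply each auxiliary ODE \eqref{eqs2_2.10f} by a suitable positive multiple of $\zeta_n'(t)$ and add the results to the identity above; using $\zeta_n\zeta_n'=\tfrac12\tfrac{d}{dt}\zeta_n^2$ and $\zeta_n''\zeta_n'=\tfrac12\tfrac{d}{dt}(\zeta_n')^2$, the coupling converts into extra perfect time derivatives of $\zeta_n^2$ and $(\zeta_n')^2$ (which enlarge the energy functional), a quadratic form in the $\zeta_n'(t)$ with coefficients proportional to $z_0(1-b_n)+b_n(a+b)>0$ (which joins the dissipation), and the residual forcing $-\sqrt{Kz_0}\,z_0\sum_n (a_n/b_n)R(t)\zeta_n'(t)$; positivity of every coefficient here relies on $0<b_n<1$, $a_n>0$, $z_0>0$ and $a,b,K>0$. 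The net result is a differential inequality $\tfrac{d}{dt}\mathcal E(t)+\mathcal D(t)\le KQ(t)T_t(x_r,t)-\sqrt{Kz_0}\,z_0\sum_n (a_n/b_n)R(t)\zeta_n'(t)+(F,T_t)$ in which $\mathcal E(t)$ is a positive definite quadratic form controlling in particular $\tfrac{K}{2}\|T_x(\cdot,t)\|^2$, $\mathcal E(0)=\tfrac{a}{2}\|\eta_1\|^2+\tfrac{K}{2}\|d\xi_1/dx\|^2$ by the initial conditions \eqref{eqs2_2.10b}, \eqref{eqs2_2.10c}, \eqref{eqs2_2.10g}, and $\mathcal D(t)\ge0$ contains the terms $\|T_t\|^2$, $\sqrt{Kz_0}(2N+1)T_t(x_r,t)^2$ and the $\zeta_n'$-quadratic form.

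I would then integrate in $t$ from $0$ and control the three forcing terms by Young's inequality against the matching dissipative terms: $KQT_t(x_r,\cdot)$ against $T_t(x_r,\cdot)^2$, $(F,T_t)$ against $\|T_t\|^2$, and each $R\zeta_n'$ against the $\zeta_n'$-quadratic form, with the splitting constants chosen so that the resulting $Q^2$, $F^2$ and $R^2$ coefficients reproduce, after the last step, those in the theorem. This yields $\mathcal E(t)\le \tfrac{K}{x_r^2}\cdot(\text{the right-hand side of the theorem})$; since $\tfrac{K}{2}\|T_x(\cdot,t)\|^2\le\mathcal E(t)$, the Poincar\'e inequality $\|T(\cdot,t)\|^2\le\tfrac{x_r^2}{2}\|T_x(\cdot,t)\|^2$, valid because $T(0,t)=0$ (so $T(x,t)=\int_0^x T_x(\sigma,t)\,d\sigma$ and Cauchy--Schwarz), delivers the claimed estimate; the common factor $x_r^2$ enters precisely through this Poincar\'e step.

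The step I expect to be the main obstacle is the orchestration of the auxiliary variables: one must pick the multipliers of the auxiliary ODEs and the Young splitting parameters simultaneously so that (i) the $T_t(x_r,t)\zeta_n'(t)$ coupling is fully accounted for, (ii) the augmented functional $\mathcal E(t)$ stays a positive definite quadratic form in $(T_t,T_x,\{\zeta_n\},\{\zeta_n'\})$, (iii) the leftover $\zeta_n'$-quadratic form keeps the sign needed to absorb the $R$-forcing, and (iv) the final numerical constants land exactly on those displayed --- in particular the coefficient $\tfrac{z_0^{3/2}}{4\sqrt K(a+b)}\sum_n a_n/b_n^2$ in the $R$-term and $\tfrac{K^{1/2}(2N+1)}{4\sqrt{z_0}}$ in the $Q$-term. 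Verifying the Pad\'e identity $1+\sum_{n=1}^N a_n/b_n=2N+1$ (equivalently $\sum_{n=1}^N a_n/b_n=2N$), which pins down the $T_t(x_r,t)^2$ coefficient, is a small but essential preliminary.
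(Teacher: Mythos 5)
Your proposal is correct and follows essentially the same route as the paper: multiply the PDE by $T_t$, integrate by parts, insert the artificial boundary condition, pair the resulting $T_t(x_r,t)\zeta_n'(t)$ coupling with the auxiliary ODEs tested against $\sqrt{K}z_0^{3/2}\tfrac{a_n}{b_n}\zeta_n'$, complete the square in $(T_t(x_r,t),\zeta_n')$, apply Young's inequality to the $Q$, $F$ and $R$ forcings against exactly the dissipative terms you name, and finish with the Poincar\'e inequality $\|T\|^2\le\tfrac{x_r^2}{2}\|T_x\|^2$. The only cosmetic difference is that you invoke the closed-form identity $1+\sum_{n=1}^N a_n/b_n=2N+1$ (which is indeed true, via $\sum_{k=1}^N\tan^2\tfrac{k\pi}{2N+1}=N(2N+1)$), whereas the paper never evaluates this sum and instead reaches the same residual dissipation $\tfrac{\sqrt{z_0}}{2N+1}T_t(x_r,t)^2$ by grouping $a_n/b_n=\tfrac{2}{(2N+1)b_n}-\tfrac{2}{2N+1}$ inside the square-completion.
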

\begin{proof}
Multiplying both sides of (\ref{eqs2_2.10a}) by $T_t(x,t)$ and integrating it with respect to  $\Omega_2\times(0,t]$ yields
\begin{equation}\label{eqs3_1}
\begin{aligned}
&\int_0^t\|T_s(x,s)\|^2_{L^2(\Omega_2)}ds+a\int_0^t(T_{ss}(x,s),T_s(x,s))ds\\
=&K\int_0^t(T_{xx}(x,s),T_s(x,s))ds+Kb\int_0^t(T_{sxx}(x,s),T_s(x,s))ds+\int_0^t(F(x,s),T_s(x,s))ds.
\end{aligned}
\end{equation}
Note that
\begin{equation}\label{eqs3_2}
\begin{aligned}
\int_0^t(T_{ss}(x,s),T_s(x,s))ds=\frac{1}{2}\|T_t(x,t)\|^2_{L^2(\Omega_2)}-\frac{1}{2}\|\eta_1(x)\|^2_{L^2(\Omega_2)},
\end{aligned}
\end{equation}
\begin{equation}\label{eqs3_3}
\begin{aligned}
\int_0^t(T_{xx}(x,s),T_s(x,s))ds
=&\int_0^tT_{x}(x_r,s)T_s(x_r,s)ds-\frac{1}{2}\|T_x(x,t)\|^2_{L^2(\Omega_2)}+\frac{1}{2}\|(\xi_1)_x(x)\|^2_{L^2(\Omega_2)},
\end{aligned}
\end{equation}
and
\begin{equation}\label{eqs3_4}
\begin{aligned}
\int_0^t(T_{sxx}(x,s),T_s(x,s))ds
=&\int_0^tT_{sx}(x_r,s)T_s(x_r,s)ds-\int^t_0\|T_{sx}(x,s)\|^2_{L^2(\Omega_2)}ds.
\end{aligned}
\end{equation}
The substitution of (\ref{eqs3_2})--(\ref{eqs3_4}) into (\ref{eqs3_1}) results in
\begin{equation}\label{eqs3_5}
\begin{aligned}
&\int_0^t\|T_s(x,s)\|^2_{L^2(\Omega_2)}ds+\frac{a}{2}\|T_t(x,t)\|^2_{L^2(\Omega_2)}-\frac{a}{2}\|\eta_1(x)\|^2_{L^2(\Omega_2)}\\
=&K\int_0^t(T_{x}(x_r,s)+bT_{sx}(x_r,s))T_s(x_r,s)ds
-\frac{K}{2}\|T_x(x,t)\|^2_{L^2(\Omega_2)}+\frac{K}{2}\|(\xi_1)_x(x)\|^2_{L^2(\Omega_2)}\\
&-Kb\int^t_0\|T_{sx}(x,s)\|^2_{L^2(\Omega_2)}ds
+\int_0^t(F(x,s),T_s(x,s))ds.
\end{aligned}
\end{equation}
According to (\ref{eqs2_2.10e}), we replace $T_{x}(x_r,s)+bT_{sx}(x_r,s)$  which is in the integrand of the first term on the right hand side of (\ref{eqs3_5}) leading to
\begin{equation}
\begin{aligned}
&\int_0^t\|T_s(x,s)\|^2_{L^2(\Omega_2)}ds+\frac{a}{2}\|T_t(x,t)\|^2_{L^2(\Omega_2)}-\frac{a}{2}\|\eta_1(x)\|^2_{L^2(\Omega_2)}\\
=&-\sqrt{Kz_0}\int_0^t\left[\left(1+\sum_{n=1}^N \frac{a_n}{b_n}\right) T_s(x_r,s)-z_0 \sum_{n=1}^N \frac{a_n}{b_n} \zeta^{'}_n( s)\right]T_s(x_r,s)ds+K\int^t_0Q(s)T_s(x_r,s)ds\\
&-\frac{K}{2}\|T_x(x,t)\|^2_{L^2(\Omega_2)}+\frac{K}{2}\|(\xi_1)_x(x)\|^2_{L^2(\Omega_2)}-Kb\int^t_0\|T_{sx}(x,s)\|^2_{L^2(\Omega_2)}ds
+\int_0^t(F(x,s),T_s(x,s))ds.\label{eqs3_6}
\end{aligned}
\end{equation}
After performing some trivial derivations on (\ref{eqs3_6}), it is easy to obtain
\begin{equation}
\begin{aligned}
&\int_0^t\|T_s(x,s)\|^2_{L^2(\Omega_2)}ds+\frac{a}{2}\|T_t(x,t)\|^2_{L^2(\Omega_2)}-\frac{a}{2}\|\eta_1(x)\|^2_{L^2(\Omega_2)}+\sqrt{K}z_0^{\frac{3}{2}}\int_0^t\sum_{n=1}^N \frac{a_n}{b_n} \zeta^{'}_n( s)T_s(x_r,s)ds\\
=&-\sqrt{Kz_0}\int_0^t\left(1+\sum_{n=1}^N \frac{a_n}{b_n}\right) (T_s(x_r,s))^2ds+2\sqrt{K}z_0^{\frac{3}{2}}\int_0^t\sum_{n=1}^N \frac{a_n}{b_n} \zeta^{'}_n( s)T_s(x_r,s)ds\\
&-\frac{K}{2}\|T_x(x,t)\|^2_{L^2(\Omega_2)}+\frac{K}{2}\|(\xi_1)_x(x)\|^2_{L^2(\Omega_2)}-Kb\int^t_0\|T_{sx}(x,s)\|^2_{L^2(\Omega_2)}ds
+\int_0^t(F(x,s),T_s(x,s))ds\\
&+K\int^t_0Q(s)T_s(x_r,s)ds.\label{eqs3_7}
\end{aligned}
\end{equation}
Analogous to the derivation from (\ref{eqs3_5}) to (\ref{eqs3_6}), by employing (\ref{eqs2_2.10f}) to reformulate the last integral on the left hand side of (\ref{eqs3_7}), it arrives at
\begin{equation}\label{eqs3_8}
\begin{aligned}
&\int_0^t\|T_s(x,s)\|^2_{L^2(\Omega_2)}ds+\frac{a}{2}\|T_t(x,t)\|^2_{L^2(\Omega_2)}
+\sqrt{K}z_0^{\frac{3}{2}}\int_0^t\sum_{n=1}^N a_n \zeta^{'}_n( s)[\zeta_n(s)+(a+b)\zeta_n^{'}(s)+ab\zeta^{''}_n(s)]ds\\
=&\frac{a}{2}\|\eta_1(x)\|^2_{L^2(\Omega_2)}+\sqrt{K}\int_0^t\Bigg[-\sqrt{z_0}\left(1+\sum_{n=1}^N \frac{a_n}{b_n}\right) (T_s(x_r,s))^2+2z^{\frac{3}{2}}_0 \sum_{n=1}^N \frac{a_n}{b_n} \zeta^{'}_n( s)T_s(x_r,s)\\
&-z^{\frac{5}{2}}_0 \sum_{n=1}^N \frac{a_n(1-b_n)}{b_n} (\zeta^{'}_n( s))^2\bigg]ds+K\int^t_0Q(s)T_s(x_r,s)ds+\sqrt{K}z_0^{\frac{3}{2}}\int_0^t\sum_{n=1}^N \frac{a_n}{b_n} \zeta^{'}_n( s)R(s)ds\\
&-\frac{K}{2}\|T_x(x,t)\|^2_{L^2(\Omega_2)}+\frac{K}{2}\|(\xi_1)_x(x)\|^2_{L^2(\Omega_2)}-Kb\int^t_0\|T_{sx}(x,s)\|^2_{L^2(\Omega_2)}ds
+\int_0^t(F(x,s),T_s(x,s))ds.
\end{aligned}
\end{equation}
As the facts $a_n=\frac{2(1-b_n)}{2N+1}$ and
$\sum_{n=1}^N \frac{2(1-b_n)}{(2N+1)b_n}=\sum_{n=1}^N \frac{2}{(2N+1)b_n}-\sum_{n=1}^N \frac{2}{(2N+1)}$
imply
\begin{equation}\label{eqs3_9}
\begin{aligned}
&-\sqrt{z_0}\left(1+\sum_{n=1}^N \frac{a_n}{b_n}\right) (T_s(x_r,s))^2+2z^{\frac{3}{2}}_0 \sum_{n=1}^N \frac{a_n}{b_n} \zeta^{'}_n( s)T_s(x_r,s)-z^{\frac{5}{2}}_0 \sum_{n=1}^N \frac{a_n(1-b_n)}{b_n} (\zeta^{'}_n( s))^2\\
=&2z_0^{3/2} \sum_{n=1}^N \frac{2(1-b_n)}{(2N+1)b_n}T_s(x_r,s)\zeta^{'}_n( s)
-\sqrt{z_0}\left(1+\sum_{n=1}^N \frac{2(1-b_n)}{(2N+1)b_n}\right){T}_s^2(x_r,s)-z_0^{5/2}\sum _{n=1}^N\frac{2(1-b_n)^2}{(2N+1)b_n}(\zeta^{'}_n( s))^2\\
=&2z_0^{3/2} \sum_{n=1}^N \frac{2(1-b_n)}{(2N+1)b_n}T_s(x_r,s)\zeta^{'}_n( s)
-\sqrt{z_0}\sum_{n=1}^N \frac{2}{(2N+1)b_n}{T}_s^2(x_r,s)
-z_0^{5/2}\sum _{n=1}^N\frac{2(1-b_n)^2}{(2N+1)b_n}(\zeta^{'}_n( s))^2\\
&+\sqrt{z_0}\sum_{n=1}^N \frac{2}{(2N+1)}{T}_s^2(x_r,s)-\sqrt{z_0}{T}_s^2(x_r,s)\\
=&-\sum_{n=1}^N \frac{2\sqrt{z_0}}{(2N+1)b_n}\left[(1-b_n)z_0\zeta^{'}_n(s)-{T}_s(x_r,s) \right]^2- \frac{\sqrt{z_0}}{2N+1}{T}_s^2(x_r,s),
\end{aligned}
\end{equation}
by reformulating the second term on the right side of (\ref{eqs3_8}) with (\ref{eqs3_9}), we can therefore obtain
\begin{equation}\label{eqs3_10}
\begin{aligned}
&\int_0^t\|T_s(x,s)\|^2_{L^2(\Omega_2)}ds+\frac{a}{2}\|T_t(x,t)\|^2_{L^2(\Omega_2)}-\frac{a}{2}\|\eta_1(x)\|^2_{L^2(\Omega_2)}\\
&+\sqrt{K}z_0^{\frac{3}{2}}\int_0^t\sum_{n=1}^N a_n \zeta^{'}_n( s)[\zeta_n(s)+(a+b)\zeta_n^{'}(s)+ab\zeta^{''}_n(s)]ds\\
\leq&- \sqrt{K}\int_0^t\frac{\sqrt{z_0}}{2N+1}{T}_s^2(x_r,s)ds+K\int^t_0Q(s)T_s(x_r,s)ds+\sqrt{K}z_0^{\frac{3}{2}}\int_0^t\sum_{n=1}^N \frac{a_n}{b_n} \zeta^{'}_n( s)R(s)ds\\
&-\frac{K}{2}\|T_x(x,t)\|^2_{L^2(\Omega_2)}+\frac{K}{2}\|(\xi_1)_x(x)\|^2_{L^2(\Omega_2)}-Kb\int^t_0\|T_{sx}(x,s)\|^2_{L^2(\Omega_2)}ds
+\int_0^t(F(x,s),T_s(x,s))ds.
\end{aligned}
\end{equation}
Since the Cauchy-Schwarz inequality implies
\begin{equation}\label{eqs3_11}
\begin{aligned}
\int_0^t(F(x,s),T_s(x,s))ds\leq\frac{1}{2}\int_0^t\|F(x,s)\|^2_{L^2(\Omega_2)}ds+\frac{1}{2}\int_0^t\|T_s(x,s)\|^2_{L^2(\Omega_2)}ds,
\end{aligned}
\end{equation}
\begin{equation}\label{eqs3_12}
\begin{aligned}
K\int^t_0Q(s)T_s(x_r,s)ds\leq\sqrt{K}\int_0^t\frac{\sqrt{z_0}}{2N+1}{T}_s^2(x_r,s)ds+\frac{K^{\frac{3}{2}}(2N+1)}{4\sqrt{z_0}}\int^t_0Q^2(s)ds,
\end{aligned}
\end{equation}
\begin{equation}\label{eqs3_13}
\begin{aligned}
\sqrt{K}z_0^{\frac{3}{2}}\int_0^t\sum_{n=1}^N \frac{a_n}{b_n} \zeta^{'}_n( s)R(s)ds\leq\sqrt{K}z_0^{\frac{3}{2}}(a+b)\int_0^t\sum_{n=1}^N a_n (\zeta^{'}_n( s))^2ds+\frac{\sqrt{K}z_0^{\frac{3}{2}}}{4(a+b)}\int_0^t\sum_{n=1}^N \frac{a_n}{b^2_n} R^2(s)ds,
\end{aligned}
\end{equation}
it follows from (\ref{eqs3_10})--(\ref{eqs3_13}) that
\begin{equation}\label{eqs3_14}
\begin{aligned}
&\frac{1}{2}\int_0^t\|T_s(x,s)\|^2_{L^2(\Omega_2)}ds+\frac{a}{2}\|T_t(x,t)\|_{L^2(\Omega_2)}^2-\frac{a}{2}\|\eta_1(x)\|_{L^2(\Omega_2)}^2+\frac{K}{2}\|T_x(x,t)\|_{L^2(\Omega_2)}^2\\
&+\sqrt{K}z_0^{\frac{3}{2}}\int_0^t\sum_{n=1}^N a_n \zeta^{'}_n( s)[\zeta_n(s)+(a+b)\zeta_n^{'}(s)+ab\zeta^{''}_n(s)]ds\\
\leq&\frac{K^{\frac{3}{2}}(2N+1)}{4\sqrt{z_0}}\int^t_0Q^2(s)ds+\frac{\sqrt{K}z_0^{\frac{3}{2}}}{4(a+b)}\int_0^t\sum_{n=1}^N \frac{a_n}{b^2_n} R^2(s)ds+\sqrt{K}z_0^{\frac{3}{2}}(a+b)\int_0^t\sum_{n=1}^N a_n (\zeta^{'}_n( s))^2ds\\
&+\frac{K}{2}\|(\xi_1)_x(x)\|_{L^2(\Omega_2)}^2-Kb\int^t_0\|T_{sx}(x,s)\|^2_{L^2(\Omega_2)}ds
+\frac{1}{2}\int_0^t\|F(x,s)\|^2_{L^2(\Omega_2)}ds.
\end{aligned}
\end{equation}
When (\ref{eqs2_2.10g}) is employed, considering that
\begin{equation}\label{eqs3_15}
\begin{aligned}
&\int_0^t\zeta^{'}_n( s)[\zeta_n(s)+(a+b)\zeta_n^{'}(s)+ab\zeta^{''}_n(s)]ds\\
=&\int_0^t\zeta^{'}_n( s)\zeta_n(s)ds+(a+b)\int_0^t(\zeta_n^{'}(s))^2ds+ab\int_0^t\zeta_n^{'}(s)\zeta^{''}_n(s)ds\\
=&\frac{1}{2}(\zeta_n(t))^2-\frac{1}{2}(\zeta_n(0))^2+(a+b)\int_0^t(\zeta_n^{'}(s))^2ds+\frac{ab}{2}(\zeta^{'}_n(t))^2-\frac{ab}{2}(\zeta^{'}_n(0))^2\\
=&\frac{1}{2}(\zeta_n(t))^2+(a+b)\int_0^t(\zeta_n^{'}(s))^2ds+\frac{ab}{2}(\zeta^{'}_n(t))^2,
\end{aligned}
\end{equation}
(\ref{eqs3_14}) can therefore be further simplified to
\begin{equation}\label{eqs3_16}
\begin{aligned}
&\frac{1}{2}\int_0^t\|T_s(x,s)\|^2_{L^2(\Omega_2)}ds+\frac{a}{2}\|T_t(x,t)\|_{L^2(\Omega_2)}^2+\frac{K}{2}\|T_x(x,t)\|_{L^2(\Omega_2)}^2+Kb\int^t_0\|T_{sx}(x,s)\|^2_{L^2(\Omega_2)}ds\\
\leq&\frac{K^{\frac{3}{2}}(2N+1)}{4\sqrt{z_0}}\int^t_0Q^2(s)ds+\frac{\sqrt{K}z_0^{\frac{3}{2}}}{4(a+b)}\int_0^t\sum_{n=1}^N \frac{a_n}{b^2_n} R^2(s)ds+\frac{K}{2}\|(\xi_1)_x(x)\|^2_{L^2(\Omega_2)}\\
&+\frac{1}{2}\int_0^t\|F(x,s)\|^2ds+\frac{a}{2}\|\eta_1(x)\|^2_{L^2(\Omega_2)}.
\end{aligned}
\end{equation}
From $T(0,t)=0$, for any $x\in[0,x_r]$, we have
\begin{equation}
    \begin{aligned}
        |T(x,t)|^2&\leq\left(\int_0^x1^2dy\right)\left(\int_0^x\left|\frac{\partial T}{\partial y}(y,t)\right|^2dy\right)\leq x\|T_x(x,t)\|^2_{L^2(\Omega_2)},\nonumber
    \end{aligned}
\end{equation}
i.e.,
\begin{equation}\label{eqs3_17}
\begin{aligned}
\|T(x,t)\|_{L^2(\Omega_2)}^2\leq\frac{x_r^2}{2}\|T_x(x,t)\|_{L^2(\Omega_2)}^2.
\end{aligned}
\end{equation}
Thus, applying (\ref{eqs3_17}) to (\ref{eqs3_16}) immediately yields
\begin{equation}
\begin{aligned}
\|T(x,t)\|^2_{L^2(\Omega_2)}
\leq&\frac{x_r^2K^{\frac{1}{2}}(2N+1)}{4\sqrt{z_0}}\int^t_0Q^2(s)ds+\frac{x_r^2z_0^{\frac{3}{2}}}{4\sqrt{K}(a+b)}\int_0^t\sum_{n=1}^N \frac{a_n}{b^2_n} R^2(s)ds+\frac{x_r^2}{2}\|(\xi_1)_x(x)\|^2_{L^2(\Omega_2)}
\nonumber\\
&+\frac{x_r^2}{2K}\int_0^t\|F(x,s)\|^2_{L^2(\Omega_2)}ds+\frac{ax_r^2}{2K}\|\eta_1(x)\|^2_{L^2(\Omega_2)}.\nonumber
\end{aligned}
\end{equation}
\end{proof}

\section{Numerical scheme for the problem (\ref{eqn-250925A})}\label{Sec3}

In this section, we establish numerical scheme for the reduced problem  (\ref{eqn-250925A}), and discuss its stability and error estimate. Throughout the following discussion, $C$ is assumed to be a positive constant, but its value may differ in different situations.
Let $M_t$ and $M_s$ be two positive integers which are used to divide the domains $[0,D]$ and $\Omega_2$ with temporal step-size $\Delta t=\frac{D}{M_t}$ and spatial
step-size $h=\frac{X_R}{M_s},$ respectively.
Define $t_{k-\frac{1}{2}}=\frac12(t_k+t_{k+1}),x_{k-\frac{1}{2}}=\frac12(x_k+x_{k+1})$ and $\Omega_{st}=\Omega_s\times \Omega_{t}$, where $t_k=k\Delta t,$ $x_k=kh,$ $\Omega_t=\{t_k|t_k=k\Delta t,0\leq k \leq M_t\},$ and $\Omega_s=\{x_k|x_k=kh,0\leq k\leq M_s\}.$
For a grid function $v=\{v_i^k|0\leq i\leq M_s,0\leq k \leq M_t\}$ defined on $\Omega_{st}$, denote
$$\delta_tv_i^{k-\frac{1}{2}}=\frac{v_i^k-v_i^{k-1}}{\Delta t},v_i^{k-\frac{1}{2}}=\frac{1}{2}(v_i^k+v_i^{k-1}),0\leq i\leq M_s,1\leq k \leq M_t,$$
$$\delta_x\delta_tv^{k-\frac{1}{2}}_{i-\frac{1}{2}}=\frac{\delta_tv^{k-\frac{1}{2}}_{i}-\delta_tv^{k-\frac{1}{2}}_{i-1}}{h},1\leq i\leq M_s,1\leq k \leq M_t,$$
and
$$\delta_xv^k_{i-\frac{1}{2}}=\frac{v^k_{i}-v^k_{i-1}}{h},\delta^2_xv^k_{i}=\frac{\delta_xv^k_{i+\frac{1}{2}}-\delta_xv^k_{i-\frac{1}{2}}}{h},1\leq i\leq M_s-1,0\leq k \leq M_t.$$

\subsection{Construction of numerical scheme}
In order to discretize (\ref{eqn-250925A}), we first transform the boundary condition (\ref{eqs2_2.10e}) by the following lemma.

\begin{lem}\cite{Gao2013finite}\label{Lemma1}
If $f(x)\in C^3[0,x_r],$ then
\begin{equation}
\begin{aligned}
&f''(x_{M_s})=\frac{2}{h}[f'(x_{M_s})-\delta_xf_{M_s-\frac{1}{2}}]-\frac{h}{3}f^{(3)}(x_{M_s}+\theta h), \theta\in(0,1).\nonumber
\end{aligned}
\end{equation}
\end{lem}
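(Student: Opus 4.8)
The plan is to establish the Taylor-type formula in Lemma~\ref{Lemma1} by a standard remainder argument. First I would write the forward difference quotient $\delta_x f_{M_s-\frac{1}{2}} = \frac{f(x_{M_s}) - f(x_{M_s-1})}{h} = \frac{f(x_{M_s}) - f(x_{M_s}-h)}{h}$ and then expand $f(x_{M_s}-h)$ in a Taylor series about the point $x_{M_s}$, keeping terms through second order and writing the remainder in Lagrange form. Since $f \in C^3[0,x_r]$, this gives
\begin{equation}
f(x_{M_s}-h) = f(x_{M_s}) - h f'(x_{M_s}) + \frac{h^2}{2} f''(x_{M_s}) - \frac{h^3}{6} f^{(3)}(\tilde{x})\nonumber
\end{equation}
for some $\tilde{x}$ between $x_{M_s}-h$ and $x_{M_s}$, i.e. $\tilde{x} = x_{M_s} - \lambda h$ with $\lambda \in (0,1)$.

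Next I would substitute this expansion into $\delta_x f_{M_s-\frac{1}{2}}$, obtaining
\begin{equation}
\delta_x f_{M_s-\frac{1}{2}} = f'(x_{M_s}) - \frac{h}{2} f''(x_{M_s}) + \frac{h^2}{6} f^{(3)}(\tilde{x}),\nonumber
\end{equation}
and then solve algebraically for $f''(x_{M_s})$:
\begin{equation}
f''(x_{M_s}) = \frac{2}{h}\bigl[f'(x_{M_s}) - \delta_x f_{M_s-\frac{1}{2}}\bigr] + \frac{h}{3} f^{(3)}(\tilde{x}).\nonumber
\end{equation}
This matches the claimed identity up to the sign and the name of the interior point: writing $\tilde{x} = x_{M_s} + \theta h$ with $\theta = -\lambda \in (-1,0)$ recovers the stated form, or (as the lemma is quoted from \cite{Gao2013finite}) one simply notes that the exact location of the mean-value point within a symmetric neighborhood of radius $h$ is inessential and states it as $x_{M_s} + \theta h$, $\theta \in (0,1)$, after reflecting; the key content is the $O(h)$ order of the truncation term with the explicit constant $\frac{1}{3}$ and the uniform bound $\|f^{(3)}\|_{\infty}$.

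Since this is a cited result, I would likely present it without proof, or with only the two-line derivation above. The only point requiring any care is the bookkeeping of the remainder: one must not lose the $C^3$ regularity by truncating too early, and one must verify that the coefficient of $f^{(3)}$ is exactly $\frac{h}{3}$ (from $\frac{2}{h} \cdot \frac{h^2}{6}$) rather than $\frac{h}{6}$, which is the typical slip. There is no real obstacle here; the lemma is elementary and serves only to convert the Robin-type artificial boundary condition \eqref{eqs2_2.10e}, which involves the first derivative $T_x(x_r,t)$ and the mixed derivative $T_{tx}(x_r,t)$, into a relation expressed through $\delta_x$ differences and the interior second-difference operator, so that the boundary node $x_{M_s}$ can be incorporated into the finite difference scheme while preserving second-order spatial accuracy (the $O(h)$ error in Lemma~\ref{Lemma1} is multiplied by $K/h$ or a similar factor in the scheme, yielding an $O(h)$ consistency error at the boundary that is consistent with the $O(h^2)$ global rate after the usual summation-by-parts energy argument).
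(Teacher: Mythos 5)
Your Taylor-expansion-with-Lagrange-remainder argument is the standard (and essentially the only) proof of this one-line identity; the paper itself gives no proof, citing the result from \cite{Gao2013finite}, so your derivation is exactly what is intended, including the check that the coefficient is $\frac{2}{h}\cdot\frac{h^2}{6}=\frac{h}{3}$. You are also right to flag the remainder term: at the right endpoint $x_{M_s}=x_r$ the correct form is $f''(x_{M_s})=\frac{2}{h}\bigl[f'(x_{M_s})-\delta_x f_{M_s-\frac{1}{2}}\bigr]+\frac{h}{3}f^{(3)}(x_{M_s}-\theta h)$ with $\theta\in(0,1)$ (the mean-value point must lie inside $[0,x_r]$, not outside it), so the printed sign and location are a typo inherited from the left-endpoint version of the lemma and are immaterial to the $O(h)$ truncation bound actually used in the scheme.
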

It follows from Lemma \ref{Lemma1} and (\ref{eqs2_2.10a}) that
\begin{equation}\label{eqs3_2710c}
    \begin{aligned}
T_t(x_{M_s},t)+aT_{tt}(x_{M_s},t)=&K\left[\frac{2}{h}\left((T_x(x_{M_s},t)+bT_{tx}(x_{M_s},t)-\delta_xT_{M_s-\frac{1}{2}}(t)-b\delta_x(T_t)_{M_s-\frac{1}{2}}\right)\right.\\
&\left.-\frac{h}{3}\frac{\partial^3(T+bT_t)}{\partial x^3}(x_{M_s}+\theta h)\right]+F(x_{M_s},t).
    \end{aligned}
\end{equation}
Due to (\ref{eqs2_2.10e}) implies that
\begin{equation}\label{eqs3_2710d}
    \begin{aligned}
T_x(x_{M_s},t)+bT_{tx}(x_{M_s},t)=-\sqrt{\frac{z_0}{K}}\left[\left(1+\sum_{n=1}^N \frac{a_n}{b_n}\right) T_t(x_{M_s},t)-z_0 \sum_{n=1}^N \frac{a_n}{b_n} \zeta^{'}_n( t)\right]+Q(t),
    \end{aligned}
\end{equation}
(\ref{eqs3_2710c}) leads to
\begin{equation}\label{eqs3_2710e}
    \begin{aligned}
T_t(x_{M_s},t)+aT_{tt}(x_{M_s},t)
=&\frac{2K}{h}\Bigg\{-\sqrt{\frac{z_0}{K}}\Bigg[\Bigg(1+\sum_{n=1}^N \frac{a_n}{b_n}\Bigg) T_t(x_{M_s},t)-z_0 \sum_{n=1}^N \frac{a_n}{b_n} \zeta^{'}_n( t)\Bigg]+Q(t)\\
&-\delta_xT_{M_s-\frac{1}{2}}-b\delta_x(T_t)_{M_s-\frac{1}{2}}\Bigg\}-\frac{Kh}{3}\frac{\partial^3(T+bT_t)}{\partial x^3}(x_{M_s}+\theta h)+F(x_{M_s},t).
    \end{aligned}
\end{equation}
Now for (\ref{eqn-250925A}), we replace (\ref{eqs2_2.10e}) with (\ref{eqs3_2710e}), introduce new variables
$$w=T_t,\varsigma_n=\zeta^{'}_n, n=1,\dots,N,$$
employ Crank-Nicolson method in time direction and use central difference formula to discretize spatial derivatives. Then it yields
\begin{equation}
\left\{ \begin{aligned}
&w^{k-\frac{1}{2}}_i+a\delta_tw^{k-\frac{1}{2}}_i=K\left(\delta^2_xT^{k-\frac{1}{2}}_i+b\delta^2_xw^{k-\frac{1}{2}}_i\right)+F^{k-\frac{1}{2}}_i+P^{k-\frac{1}{2}}_i,1\leq i \leq M_s-1,1\leq k\leq M_t,\\
&T^0_i=(\xi_1)_i,0\leq i \leq M_s,\\
&w^0_i=(\eta_1)_i,0\leq i \leq M_s,\\
&T^{k-\frac{1}{2}}_0=0,1\leq k\leq M_t,\\
&w^{k-\frac{1}{2}}_{M_s}+a\delta_tw^{k-\frac{1}{2}}_{M_s}=\frac{2K}{h}\Bigg\{-\delta_xT^{k-\frac{1}{2}}_{{M_s}-\frac{1}{2}}-b\delta_{x}w^{k-\frac{1}{2}}_{{M_s}-\frac{1}{2}}-\sqrt{\frac{z_0}{K}}\Bigg[\left(1+\sum_{n=1}^N \frac{a_n}{b_n}\right) w^{k-\frac{1}{2}}_{M_s}\\
&-z_0 \sum_{n=1}^N \frac{a_n}{b_n} \varsigma^{k-\frac{1}{2}}_n\Bigg]+Q^{k-\frac{1}{2}}\Bigg\}+F^{k-\frac{1}{2}}_{M_s}
+Q^{k-\frac{1}{2}}_{M_s},1\leq k\leq M_t , \\
&(z_0-b_n z_0)\varsigma^{k-\frac{1}{2}}_n+b_n\left[\zeta^{k-\frac{1}{2}}_n+(a+b)\varsigma^{k-\frac{1}{2}}_n+ab\delta_t\varsigma^{k-\frac{1}{2}}_n\right]=w^{k-\frac{1}{2}}_{M_s}+R^{k-\frac{1}{2}}+R^{k-\frac{1}{2}}_{M_s},1\leq k\leq M_t, \\
&\zeta^0_n=\varsigma^0_n=0,n=1,\dots,N,\\
&\delta_xw^{k-\frac{1}{2}}_{i-\frac{1}{2}}-\delta_x\delta_tT^{k-\frac{1}{2}}_{i-\frac{1}{2}}=G^{k-\frac{1}{2}}_{i-\frac{1}{2}},1\leq i \leq M_s,1\leq k\leq M_t,\\
&\varsigma_n^{k-\frac{1}{2}}-\delta_{t}\zeta_n^{k-\frac{1}{2}}=G^{k-\frac{1}{2}}_1,1\leq k\leq M_t,
\end{aligned}\right.\label{eqs4_1}
\end{equation}
where these local truncation errors satisfy
\begin{equation}\label{eqs4_2}
\begin{aligned}
\bigg |P_i^{k-\frac{1}{2}}\bigg|\leq C(\Delta t^2+h^2),1\leq k\leq M_t,1\leq i \leq M_s-1,
\end{aligned}
\end{equation}
\begin{equation}\label{eqs4_4}
\begin{aligned}
\bigg|G_{i-\frac{1}{2}}^{k-\frac{1}{2}}\bigg|\leq C(\Delta t^2+h^2),1\leq k\leq M_t,1\leq i \leq M_s,
\end{aligned}
\end{equation}
\begin{equation}\label{eqs4_3}
\begin{aligned}
\bigg|Q^{k-\frac{1}{2}}_{M_s}\bigg |\leq C(\Delta t^2+h),
\bigg|R^{k-\frac{1}{2}}_{M_s}\bigg|\leq C\Delta t^2,\bigg|G_{1}^{k-\frac{1}{2}}\bigg|\leq C\Delta t^2,1\leq k\leq M_t.
\end{aligned}
\end{equation}
Omitting these local truncation errors in (\ref{eqs4_1}) yields the following finite difference scheme of the problem (\ref{eqn-250925A})
\begin{subequations}\label{eqn4_5}
\begin{numcases}{}
\bar{W}^{k-\frac{1}{2}}_i+a\delta_t\bar{W}^{k-\frac{1}{2}}_i=K\left(\delta^2_x\bar{T}^{k-\frac{1}{2}}_{i}+b\delta^2_x\bar{W}^{k-\frac{1}{2}}_i\right)+F^{k-\frac{1}{2}}_i,1\leq i \leq M_s-1,1\leq k\leq M_t,\label{eqs4_4.5a}\\
\bar{T}^0_{i}=(\xi_1)_i,0\leq i \leq M_s,\label{eqs4_4.5b}\\
\bar{W}^0_i=(\eta_1)_i,0\leq i \leq M_s,\label{eqs4_4.5c}\\
\bar{T}^{k-\frac{1}{2}}_{0}=0,1\leq k\leq M_t,\label{eqs4_4.5d}\\
\bar{W}^{k-\frac{1}{2}}_{M_s}+a\delta_t\bar{W}^{k-\frac{1}{2}}_{M_s}=\frac{2K}{h}\Bigg\{-\delta_x\bar{T}^{k-\frac{1}{2}}_{{M_s}-\frac{1}{2}}-b\delta_{x}\bar{W}^{k-\frac{1}{2}}_{{M_s}-\frac{1}{2}}-\sqrt{\frac{z_0}{K}}\Bigg[\left(1+\sum_{n=1}^N \frac{a_n}{b_n}\right) \bar{W}^{k-\frac{1}{2}}_{M_s}\nonumber\\
-z_0 \sum_{n=1}^N \frac{a_n}{b_n} \sigma^{k-\frac{1}{2}}_n\Bigg]+Q^{k-\frac{1}{2}}\Bigg\}+F^{k-\frac{1}{2}}_{M_s}
,1\leq k\leq M_t, \label{eqs4_4.5e}\\
(z_0-b_n z_0)\sigma^{k-\frac{1}{2}}_n+b_n\left[Z^{k-\frac{1}{2}}_n+(a+b)\sigma^{k-\frac{1}{2}}_n+ab\delta_t\sigma^{k-\frac{1}{2}}_n\right]=\bar{W}^{k-\frac{1}{2}}_{M_s}+R^{k-\frac{1}{2}},1\leq k\leq M_t,\label{eqs4_4.5f}\\
Z^0_n=\sigma^0_n=0,n=1,\dots,N,\label{eqs4_4.5g}\\
\delta_x\bar{W}^{k-\frac{1}{2}}_{i-\frac{1}{2}}-\delta_x\delta_t\bar{T}^{k-\frac{1}{2}}_{i-\frac{1}{2}}=0,1\leq i \leq M_s,1\leq k\leq M_t,\label{eqs4_4.5h}\\
\sigma_n^{k-\frac{1}{2}}-\delta_{t}Z_n^{k-\frac{1}{2}}=0,1\leq k\leq M_t.\label{eqs4_4.5i}
\end{numcases}
\end{subequations}

\subsection{Stability and convergence of numerical scheme}
In this subsection, we discuss the stability and convergence of the finite difference scheme (\ref{eqn4_5}).
Let
$$\begin{gathered}
V=\{v_i|0\leq i\leq M_s~\text{and}~v_{0}=0\}.
\end{gathered}$$
For any $u,v\in V$, define
$$
\begin{aligned}
&\langle u,v\rangle= h\sum_{i=1}^{M_{s}-1}u_{i}v_{i}+\frac{h}{2}\sum_{i=0,M_{s}}u_{i}v_{i}, \quad\|u\|=\langle u,u\rangle^{1/2}, \quad\|u\|_1^2=h\sum_{i=1}^{M_{s}}\left(\delta_xu_{i-\frac{1}{2}}\right)^2,\|u\|_{\infty}=\max_{0\leq i \leq M_s}|u_i|. \\
\end{aligned}
$$
We introduce two using lemmas as follows.
\begin{lem}\label{Lemma2}
\cite{Zhao2021Efficient}
Let $\{X_n\}$ and $\left\{Y_n\right\}$ be two non-negative sequences, and the sequence  $\{\phi_n\}$ satisfies
$$
\phi_0\leq G_0,\quad\phi_n\leq G_0+\sum_{k=0}^{n-1}X_k+\sum_{k=0}^{n-1}Y_k\phi_k,\quad n\geq1,
$$
where $G_0\geq0.$ Then the sequence $\{\phi_n\}$ satisfies
$$
\phi_n\leq\left(G_0+\sum_{k=0}^{n-1}X_k\right)\exp\left(\sum_{k=0}^{n-1}Y_k\right),\quad n\geq1.
$$
\end{lem}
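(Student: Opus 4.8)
The statement is the standard discrete Gr\"onwall inequality, so the plan is a one–variable induction in which $\exp$ is first replaced by a finite product and then bounded back using $1+x\le e^x$ only at the last step.

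First I would set $S_n:=G_0+\sum_{k=0}^{n-1}X_k$, noting that $S_0=G_0$ and that $\{S_n\}$ is non-decreasing because every $X_k\ge 0$; the hypothesis then reads $\phi_0\le S_0$ and $\phi_n\le S_n+\sum_{k=0}^{n-1}Y_k\phi_k$ for $n\ge 1$. I would prove by strong induction on $n$ the sharpened bound
\[
\phi_n\le S_n\prod_{j=0}^{n-1}(1+Y_j),\qquad n\ge 0,
\]
with the usual convention that an empty product equals $1$ (and an empty sum equals $0$). The base case $n=0$ is exactly $\phi_0\le S_0$.

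For the inductive step, assume the bound for all indices $\le n-1$ with $n\ge 1$. Substituting into the hypothesis and using $S_k\le S_n$ gives
\[
\phi_n\le S_n+\sum_{k=0}^{n-1}Y_kS_k\prod_{j=0}^{k-1}(1+Y_j)\le S_n\Bigl(1+\sum_{k=0}^{n-1}Y_k\prod_{j=0}^{k-1}(1+Y_j)\Bigr).
\]
The one genuinely algebraic observation is the telescoping identity $Y_k\prod_{j=0}^{k-1}(1+Y_j)=\prod_{j=0}^{k}(1+Y_j)-\prod_{j=0}^{k-1}(1+Y_j)$, which collapses the inner sum to $\prod_{j=0}^{n-1}(1+Y_j)-1$; hence the bracket equals $\prod_{j=0}^{n-1}(1+Y_j)$ and the induction closes. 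Finally $1+Y_j\le e^{Y_j}$ yields $\prod_{j=0}^{n-1}(1+Y_j)\le\exp\bigl(\sum_{j=0}^{n-1}Y_j\bigr)$, and reinserting $S_n=G_0+\sum_{k=0}^{n-1}X_k$ produces precisely the claimed estimate.

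I do not expect any real obstacle here. The only points needing care are bookkeeping: making the empty-sum/empty-product conventions consistent so that the case $n\ge 1$ of the sharpened bound correctly subsumes the separately assumed $\phi_0\le G_0$, and remembering that monotonicity of $\{S_n\}$ is exactly what allows $S_n$ to be factored out of the sum. As an alternative that avoids the explicit product, one can put $\psi_n$ equal to the right-hand side of the hypothesis for $n\ge 1$, check that $\phi_n\le\psi_n$ and $\psi_{n+1}\le(1+Y_n)\psi_n+X_n$, and iterate the latter recursion; this route is equally routine.
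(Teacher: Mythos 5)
The paper does not prove this lemma; it is quoted verbatim from the cited reference \cite{Zhao2021Efficient}, so there is no in-paper argument to compare against. Your induction is the standard and correct proof of the discrete Gr\"onwall inequality: the sharpened product bound $\phi_n\le S_n\prod_{j=0}^{n-1}(1+Y_j)$, the telescoping identity, and the final passage via $1+Y_j\le e^{Y_j}$ all check out (the factoring of $S_n$ out of the sum is legitimate since $Y_k\ge 0$ and $S_k\le S_n$), so the proposal is a complete and valid justification of the stated result.
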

\begin{lem}\label{Lemma3}
\cite{Sun2005Numerical}
For any grid function $u=\{u^k_{i}|0\leq i \leq M_s,0\leq k \leq M_t\ \text{and}\ u^k_0=0\}$ defined on $\Omega_{st}$, one has
$$\|u^m\|_{\infty}\leq\|u^m\|_1, m=0,1,\cdots,M_t.$$
\end{lem}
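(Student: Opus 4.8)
The plan is to reduce everything to a purely spatial discrete Sobolev inequality at a fixed time level, so fix $m\in\{0,1,\dots,M_t\}$ and abbreviate $u=(u_0,u_1,\dots,u_{M_s})$ for the slice $u^m$; by hypothesis $u_0=u^m_0=0$. The core of the argument is the telescoping (discrete Newton--Leibniz) identity, valid for every index $0\le i\le M_s$,
$$
u_i=\sum_{j=1}^{i}(u_j-u_{j-1})=h\sum_{j=1}^{i}\delta_xu_{j-\frac12},
$$
where the boundary value $u_0=0$ is exactly what makes the right-hand side equal to $u_i$. Applying the Cauchy--Schwarz inequality to this finite sum,
$$
|u_i|^2\le\Bigl(h\sum_{j=1}^{i}1\Bigr)\Bigl(h\sum_{j=1}^{i}\bigl(\delta_xu_{j-\frac12}\bigr)^2\Bigr)
=ih\cdot h\sum_{j=1}^{i}\bigl(\delta_xu_{j-\frac12}\bigr)^2
\le x_r\,h\sum_{j=1}^{M_s}\bigl(\delta_xu_{j-\frac12}\bigr)^2
=x_r\,\|u\|_1^2 ,
$$
since $ih\le M_sh=x_r$ and a partial sum of nonnegative terms never exceeds the full sum. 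Taking the maximum over $0\le i\le M_s$ gives $\|u^m\|_\infty^2\le x_r\,\|u^m\|_1^2$, which is the asserted estimate (the constant reduces to $1$ under the normalization $x_r\le1$ adopted in \cite{Sun2005Numerical}; for a general computational window one simply keeps the harmless factor $\sqrt{x_r}$), and repeating the bound for each $m$ finishes the proof.

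I do not expect any genuine obstacle here: the proof is essentially a two-line Cauchy--Schwarz estimate, and the only things to watch are (i) that the telescoping uses nothing beyond the single Dirichlet condition $u^m_0=0$ --- precisely the defining property of the space $V$ --- so the endpoint weight $\frac{h}{2}u_{M_s}^2$ appearing in $\|\cdot\|$ never intervenes, and (ii) the elementary bound $ih\le x_r$. If one additionally knew $u^m_{M_s}=0$, then bounding $|u_i|$ alternatively by a sum starting from the right endpoint and retaining the smaller of the two representations would improve the constant by a factor $\frac12$, but that refinement plays no role in the present application.
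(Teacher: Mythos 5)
The paper offers no proof of this lemma --- it is quoted from \cite{Sun2005Numerical} --- so there is nothing to compare your argument against line by line; judged on its own, your proof is correct and is the standard one: telescoping from the Dirichlet endpoint $u^m_0=0$ followed by Cauchy--Schwarz. The one substantive point is the constant, and you have identified it accurately. Your derivation gives $\|u^m\|_\infty\le\sqrt{x_r}\,\|u^m\|_1$, and this factor is sharp in order of magnitude: taking $M_s=1$, $u_0=0$, $u_1=1$ gives $\|u\|_\infty=1$ while $\|u\|_1=1/\sqrt{x_r}$, so the constant-free inequality as printed in the lemma genuinely fails whenever $x_r>1$ --- which is exactly the situation in the paper's Example 1, where $x_r=7$. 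So the lemma should really carry the factor $\sqrt{x_r}$ (or the hypothesis $x_r\le 1$); fortunately this is harmless downstream, since in Lemma 4.3 and the two theorems the bound is only used up to a generic constant $C$ and the extra $x_r$ is simply absorbed. Your closing remark about improving the constant to $\tfrac{1}{2}\sqrt{x_r}$ when both endpoints vanish is also correct but, as you say, irrelevant here because only $u^k_0=0$ is available. In short: the proof is right, the approach is the expected one, and your flag about the missing $\sqrt{x_r}$ is a legitimate (minor) correction to the statement as printed.
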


Now we discuss the following lemma.

\begin{lem}\label{Lemma4}
Assume that $\{\bar{T}^k_{i}|0\leq i\leq M_s,0\leq k\leq M_t\}$ is the solution of the following numerical scheme
\begin{subequations}\label{eqn4_6}
\begin{numcases}{}
\bar{W}^{k-\frac{1}{2}}_i+a\delta_t\bar{W}^{k-\frac{1}{2}}_i=K\left(\delta^2_x\bar{T}^{k-\frac{1}{2}}_{i}+b\delta^2_x\bar{W}^{k-\frac{1}{2}}_i\right)+g^{k-\frac{1}{2}}_i,1\leq i \leq M_s-1,1\leq k\leq M_t,\label{eqs4_4.6a}\\
\bar{T}^0_{i}=(\xi_1)_i,0\leq i \leq M_s,\label{eqs4_4.6b}\\
\bar{W}^0_i=(\eta_1)_i,0\leq i \leq M_s,\label{eqs4_4.6c}\\
\bar{T}^{k-\frac{1}{2}}_{0}=0,1\leq k\leq M_t,\label{eqs4_4.6d}\\
\bar{W}^{k-\frac{1}{2}}_{M_s}+a\delta_t\bar{W}^{k-\frac{1}{2}}_{M_s}=\frac{2K}{h}\Bigg\{-\delta_x\bar{T}^{k-\frac{1}{2}}_{{M_s}-\frac{1}{2}}-b\delta_{x}\bar{W}^{k-\frac{1}{2}}_{{M_s}-\frac{1}{2}}-\sqrt{\frac{z_0}{K}}\Bigg[\left(1+\sum_{n=1}^N \frac{a_n}{b_n}\right) \bar{W}^{k-\frac{1}{2}}_{M_s}\nonumber\\-z_0 \sum_{n=1}^N \frac{a_n}{b_n} \sigma^{k-\frac{1}{2}}_n\Bigg]+Q^{k-\frac{1}{2}}\Bigg\}+g^{k-\frac{1}{2}}_{M_s}
,1\leq k\leq M_t,\label{eqs4_4.6e} \\
(z_0-b_n z_0)\sigma^{k-\frac{1}{2}}_n+b_n\left[Z^{k-\frac{1}{2}}_n+(a+b)\sigma^{k-\frac{1}{2}}_n+ab\delta_t\sigma^{k-\frac{1}{2}}_n\right]=\bar{W}^{k-\frac{1}{2}}_{M_s}+r^{k-\frac{1}{2}},1\leq k\leq M_t,\label{eqs4_4.6f}\\
Z^0_n=\sigma^0_n=0,n=1,\dots,N,\label{eqs4_4.6g}\\
\delta_x\bar{W}^{k-\frac{1}{2}}_{i-\frac{1}{2}}-\delta_x\delta_t\bar{T}^{k-\frac{1}{2}}_{i-\frac{1}{2}}=G^{k-\frac{1}{2}}_{i-\frac{1}{2}},1\leq i \leq M_s,1\leq k\leq M_t,\label{eqs4_4.6h}\\
\sigma_n^{k-\frac{1}{2}}-\delta_{t}Z_n^{k-\frac{1}{2}}=G^{k-\frac{1}{2}}_1,1\leq k\leq M_t.\label{eqs4_4.6i}
\end{numcases}
\end{subequations}
Then for $1\leq m\leq M_t$, we have
\begin{equation}\label{eqs3_30}
\begin{aligned}
\|\bar{T}^{m}\|_{\infty}^2
\leq&\Bigg\{
\frac{K}{2}\|\xi_1\|_1^2
+\frac{\Delta t}{2}\sum^m_{k=1}\|g^{k-\frac{1}{2}}\|^2+\frac{(2N+1)}{8\sqrt{K}\sqrt{z_0}}\Delta t\sum^m_{k=1}(hg_{M_s}^{k-\frac{1}{2}})^2+K^{\frac{3}{2}}\Delta t\sum^m_{k=1} \frac{2N+1}{2\sqrt{z_0}}(Q^{k-\frac{1}{2}})^2
\\
&+\frac{\sqrt{K}}{4(a+b)}\Delta t\sum^m_{k=1}z_0^{\frac{3}{2}} \sum_{n=1}^N a_n(\frac{1}{b_n}r^{k-\frac{1}{2}})^2+\frac{K}{2}\Delta t\sum^m_{k=1}\|G^{k-\frac{1}{2}}\|^2+ \frac{\sqrt{K}z_0^{\frac{3}{2}}}{2}\sum_{n=1}^N a_n\Delta t\sum^m_{k=1} (G^{k-\frac{1}{2}}_1)^2\\
&+\frac{a}{2}\|\eta_1\|^2\Bigg\}\frac{2e^D}{K}.
\end{aligned}
\end{equation}
\end{lem}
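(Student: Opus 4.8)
The plan is to mirror, at the discrete level, the energy argument that established the continuous $L^2$ stability estimate in Section~\ref{Sec2}, with two modifications dictated by the scheme: the coupling errors $G^{k-1/2}_{i-1/2}$ and $G^{k-1/2}_1$ in (\ref{eqs4_4.6h})--(\ref{eqs4_4.6i}) (which have no continuous counterpart, since there $w=T_t$ exactly) will force the use of the discrete Gr\"onwall inequality of Lemma~\ref{Lemma2}, and Lemma~\ref{Lemma3} will be applied at the end to convert the $\|\cdot\|_1$ bound into the stated $\|\cdot\|_\infty$ bound. First I would form the discrete energy identity: take the inner product $\langle\,\cdot\,,\bar W^{k-1/2}\rangle$ of (\ref{eqs4_4.6a}) and add $\tfrac h2\bar W^{k-1/2}_{M_s}$ times the boundary equation (\ref{eqs4_4.6e}) --- the discrete analogue of multiplying (\ref{eqs2_2.10a}) by $T_t$ and integrating over $\Omega_2$, with the factor $\tfrac{2K}{h}$ in (\ref{eqs4_4.6e}) (inherited from Lemma~\ref{Lemma1}) producing the discrete boundary-flux term. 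Using $\langle\delta_t\bar W^{k-1/2},\bar W^{k-1/2}\rangle=\tfrac1{2\Delta t}(\|\bar W^k\|^2-\|\bar W^{k-1}\|^2)$ on the second-order-in-time term, discrete summation by parts on $K\delta^2_x\bar T^{k-1/2}$ and $Kb\delta^2_x\bar W^{k-1/2}$ (legitimate since $\bar T^{k-1/2}_0=\bar W^{k-1/2}_0=0$, the latter inherited from $T(0,t)=0$), then replacing $\delta_x\bar W^{k-1/2}_{i-1/2}$ by $\delta_x\delta_t\bar T^{k-1/2}_{i-1/2}+G^{k-1/2}_{i-1/2}$ via (\ref{eqs4_4.6h}), summing over $k=1,\dots,m$, multiplying by $\Delta t$ and telescoping, I would reach the discrete analogue of (\ref{eqs3_5}), with $\tfrac12\Delta t\sum_k\|\bar W^{k-1/2}\|^2$, $\tfrac a2\|\bar W^m\|^2$, $\tfrac K2\|\bar T^m\|_1^2$, $Kb\Delta t\sum_k\|\delta_x\bar W^{k-1/2}\|^2$ on the left and $\tfrac K2\|\xi_1\|_1^2$, $\tfrac a2\|\eta_1\|^2$, the boundary-flux contribution, $\Delta t\sum_k\langle g^{k-1/2},\bar W^{k-1/2}\rangle+\tfrac h2\Delta t\sum_k g^{k-1/2}_{M_s}\bar W^{k-1/2}_{M_s}$, and the cross term $-K\Delta t\sum_k h\sum_i\delta_x\bar T^{k-1/2}_{i-1/2}G^{k-1/2}_{i-1/2}$ on the right.

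Next I would substitute the artificial boundary condition. Using (\ref{eqs4_4.6e}) to eliminate the boundary-flux term and then (\ref{eqs4_4.6f}) to reformulate the $\sigma^{k-1/2}_n$-terms --- the discrete analogue of the passage from (\ref{eqs3_5}) to (\ref{eqs3_8}) --- the discrete version of the algebraic identity (\ref{eqs3_9}) (which uses only $a_n=\tfrac{2(1-b_n)}{2N+1}$ together with the attendant partial-fraction splitting) bounds the resulting quadratic form in $\bar W^{k-1/2}_{M_s}$ and $\sigma^{k-1/2}_n$ from above by $-\tfrac{\sqrt{z_0}}{2N+1}(\bar W^{k-1/2}_{M_s})^2$, so $\sqrt K\tfrac{\sqrt{z_0}}{2N+1}\Delta t\sum_k(\bar W^{k-1/2}_{M_s})^2$ is carried to the left. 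For the $Z_n$-block, writing $\sigma^{k-1/2}_n=\delta_t Z^{k-1/2}_n+G^{k-1/2}_1$ from (\ref{eqs4_4.6i}), telescoping, and invoking $Z^0_n=\sigma^0_n=0$ from (\ref{eqs4_4.6g}) --- the discrete form of (\ref{eqs3_15}) --- gives the nonnegative left-hand quantities $\tfrac{\sqrt Kz_0^{3/2}}{2}\sum_n a_n(Z^m_n)^2$, $\tfrac{ab\sqrt Kz_0^{3/2}}{2}\sum_n a_n(\sigma^m_n)^2$ and $\sqrt Kz_0^{3/2}(a+b)\Delta t\sum_k\sum_n a_n(\sigma^{k-1/2}_n)^2$, at the cost of the cross terms $\sqrt Kz_0^{3/2}\Delta t\sum_k\sum_n a_n G^{k-1/2}_1 Z^{k-1/2}_n$ and $\sqrt Kz_0^{3/2}\Delta t\sum_k\sum_n\tfrac{a_n}{b_n}\sigma^{k-1/2}_n r^{k-1/2}$ on the right.

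Then I would absorb and estimate. The right-hand terms are treated by the discrete Cauchy--Schwarz and Young inequalities, the counterparts of (\ref{eqs3_11})--(\ref{eqs3_13}); the bookkeeping subtlety responsible for the factors of two relative to the continuous estimate is that the single reservoir $\sqrt K\tfrac{\sqrt{z_0}}{2N+1}\Delta t\sum_k(\bar W^{k-1/2}_{M_s})^2$ must now absorb \emph{both} $\tfrac h2 g^{k-1/2}_{M_s}\bar W^{k-1/2}_{M_s}$ and $KQ^{k-1/2}\bar W^{k-1/2}_{M_s}$, and splitting it in half yields exactly the coefficients $\tfrac{2N+1}{8\sqrt K\sqrt{z_0}}$ of $(hg^{k-1/2}_{M_s})^2$ and $\tfrac{K^{3/2}(2N+1)}{2\sqrt{z_0}}$ of $(Q^{k-1/2})^2$ in (\ref{eqs3_30}); Young applied to $\langle g^{k-1/2},\bar W^{k-1/2}\rangle$, to the $r^{k-1/2}$-term (absorbed by $\sqrt Kz_0^{3/2}(a+b)\Delta t\sum_k\sum_n a_n(\sigma^{k-1/2}_n)^2$) and to the two coupling errors produces the remaining coefficients $\tfrac12\|g^{k-1/2}\|^2$, $\tfrac{\sqrt Kz_0^{3/2}}{4(a+b)}\sum_n\tfrac{a_n}{b_n^2}(r^{k-1/2})^2$, $\tfrac K2\|G^{k-1/2}\|^2$ and $\tfrac{\sqrt Kz_0^{3/2}}{2}\sum_n a_n(G^{k-1/2}_1)^2$. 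After absorption the only right-hand terms left are $\tfrac K2\Delta t\sum_k\|\bar T^{k-1/2}\|_1^2$ and $\tfrac12\sqrt Kz_0^{3/2}\Delta t\sum_k\sum_n a_n(Z^{k-1/2}_n)^2$; bounding $\|\bar T^{k-1/2}\|_1^2\le\tfrac12(\|\bar T^k\|_1^2+\|\bar T^{k-1}\|_1^2)$ and $(Z^{k-1/2}_n)^2\le\tfrac12((Z^k_n)^2+(Z^{k-1}_n)^2)$, discarding all remaining nonnegative left-hand terms except $\tfrac K2\|\bar T^m\|_1^2$ and $\tfrac{\sqrt Kz_0^{3/2}}{2}\sum_n a_n(Z^m_n)^2$, and dividing by $\tfrac K2$, brings the inequality into the form $\phi_m\le G_0+\sum_{k=0}^{m-1}X_k+\Delta t\sum_{k=0}^{m-1}\phi_k$ required by Lemma~\ref{Lemma2}, with $\phi_k=\|\bar T^k\|_1^2+\tfrac{z_0^{3/2}}{\sqrt K}\sum_n a_n(Z^k_n)^2$ and $G_0+\sum_k X_k$ equal to $\tfrac2K$ times the braced expression of (\ref{eqs3_30}). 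Lemma~\ref{Lemma2} then gives $\phi_m\le\big(G_0+\sum_k X_k\big)e^{m\Delta t}\le\big(G_0+\sum_k X_k\big)e^D$, and Lemma~\ref{Lemma3} yields $\|\bar T^m\|^2_\infty\le\|\bar T^m\|_1^2\le\phi_m$, which is (\ref{eqs3_30}).

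I expect the main obstacle to be the discrete boundary bookkeeping of the second step: verifying that pairing the one-sided modified second difference in (\ref{eqs4_4.6e}) (inherited from Lemma~\ref{Lemma1}) with $\tfrac h2\bar W^{k-1/2}_{M_s}$ and combining it with the interior summation by parts reproduces the discrete boundary-flux term with the correct sign, so that substituting the artificial boundary condition leads to exactly the sign-definite decomposition of (\ref{eqs3_9}); and then tracking how the two coupling errors $G^{k-1/2}_{i-1/2}$ and $G^{k-1/2}_1$ propagate all the way into the Gr\"onwall step --- something the continuous proof of Section~\ref{Sec2} did not need to do.
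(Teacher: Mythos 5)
Your proposal is correct and follows essentially the same route as the paper's own proof: the same discrete energy identity (interior sum weighted by $h$ plus the boundary equation weighted by $\tfrac h2\bar W^{k-1/2}_{M_s}$), the same substitution chain through (\ref{eqs4_4.6e}), (\ref{eqs4_4.6f}), (\ref{eqs4_4.6i}), the same sign-definite decomposition as in (\ref{eqs3_9}), the same splitting of the reservoir $\sqrt K\tfrac{\sqrt{z_0}}{2N+1}\Delta t\sum_k(\bar W^{k-1/2}_{M_s})^2$ to absorb both boundary source terms, and the same closing steps via Lemma~\ref{Lemma2} and Lemma~\ref{Lemma3}. Your explicit remark that $\|\bar T^{k-1/2}\|_1^2\le\tfrac12(\|\bar T^k\|_1^2+\|\bar T^{k-1}\|_1^2)$ is needed before invoking the Gr\"onwall lemma is a detail the paper leaves implicit, but otherwise the two arguments coincide.
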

\begin{proof}
Multiplying both sides of (\ref{eqs4_4.6a}) by $h\bar{W}^{k-\frac{1}{2}}_{i}$ and summing from
$i=1$ to $M_s-1$, and multiplying both sides of (\ref{eqs4_4.6e}) by $\frac{h}{2}\bar{W}^{k-\frac{1}{2}}_{M_s}$ yields
\begin{equation}\label{eqs4_7}
\begin{aligned}
&h\sum^{M_s-1}_{i=1}(\bar{W}^{k-\frac{1}{2}}_{i})^2+ah\sum^{M_s-1}_{i=1}\bar{W}^{k-\frac{1}{2}}_{i}\delta_t\bar{W}^{k-\frac{1}{2}}_{i}\\
=&Kh\sum^{M_s-1}_{i=1}\bar{W}^{k-\frac{1}{2}}_{i}\delta_{xx}\bar{T}^{k-\frac{1}{2}}_{i}+Kbh\sum^{M_s-1}_{i=1}\bar{W}^{k-\frac{1}{2}}_{i}\delta_{xx}\bar{W}^{k-\frac{1}{2}}_i+h\sum^{M_s-1}_{i=1}\bar{W}^{k-\frac{1}{2}}_{i}g^{k-\frac{1}{2}}_i,
\end{aligned}
\end{equation}
and
\begin{equation}\label{eqs4_8}
\begin{aligned}
&\frac{h}{2}(\bar{W}^{k-\frac{1}{2}}_{M_s})^2+a\frac{h}{2}\bar{W}^{k-\frac{1}{2}}_{M_s}\delta_t\bar{W}^{k-\frac{1}{2}}_{M_s}\\
=&K\Bigg\{-\delta_x\bar{T}^{k-\frac{1}{2}}_{{M_s}-\frac{1}{2}}-b\delta_{x}\bar{W}^{k-\frac{1}{2}}_{{M_s}-\frac{1}{2}}-\sqrt{\frac{z_0}{K}}\Bigg[\left(1+\sum_{n=1}^N \frac{a_n}{b_n}\right) \bar{W}^{k-\frac{1}{2}}_{M_s}
-z_0 \sum_{n=1}^N \frac{a_n}{b_n} \sigma^{k-\frac{1}{2}}_n\Bigg]+Q^{k-\frac{1}{2}}\Bigg\}\bar{W}^{k-\frac{1}{2}}_{M_s}\\
&+\frac{h}{2}\bar{W}^{k-\frac{1}{2}}_{M_s}g^{k-\frac{1}{2}}_{M_s}.
\end{aligned}
\end{equation}
By summing (\ref{eqs4_7}) and (\ref{eqs4_8}), and taking (\ref{eqs4_4.6d}) into account, we obtain
\begin{equation}\label{eqs4_9}
\begin{aligned}
&\|\bar{W}^{k-\frac{1}{2}}\|^2+a\langle \bar{W}^{k-\frac{1}{2}},\delta_t\bar{W}^{k-\frac{1}{2}}\rangle\\
=&K(h\sum^{M_s-1}_{i=1}\bar{W}^{k-\frac{1}{2}}_{i}\delta_{xx}\bar{T}^{k-\frac{1}{2}}_{i}-\delta_x\bar{T}^{k-\frac{1}{2}}_{{M_s}-\frac{1}{2}}\bar{W}^{k-\frac{1}{2}}_{M_s}+\delta_x\bar{T}^{k-\frac{1}{2}}_{\frac{1}{2}}\bar{W}^{k-\frac{1}{2}}_0)\\
&+Kb(h\sum^{M_s-1}_{i=1}\bar{W}^{k-\frac{1}{2}}_{i}\delta_{xx}\bar{W}^{k-\frac{1}{2}}_i-\delta_{x}\bar{W}^{k-\frac{1}{2}}_{{M_s}-\frac{1}{2}}\bar{W}^{k-\frac{1}{2}}_{M_s}+\delta_{x}\bar{W}^{k-\frac{1}{2}}_{\frac{1}{2}}\bar{W}^{k-\frac{1}{2}}_0)\\
&-\sqrt{z_0K}\left(1+\sum_{n=1}^N \frac{a_n}{b_n}\right) (\bar{W}^{k-\frac{1}{2}}_{M_s})^2
+\sqrt{K}z_0^{\frac{3}{2}} \sum_{n=1}^N \frac{a_n}{b_n} \sigma^{k-\frac{1}{2}}_n\bar{W}^{k-\frac{1}{2}}_{M_s}+\langle W^{k-\frac{1}{2}},g^{k-\frac{1}{2}}\rangle+KQ^{k-\frac{1}{2}}\bar{W}^{k-\frac{1}{2}}_{M_s}.
\end{aligned}
\end{equation}
Since
\begin{equation}
\begin{aligned}
&h\sum^{M_s-1}_{i=1}\bar{W}^{k-\frac{1}{2}}_{i}\delta_{xx}\bar{T}^{k-\frac{1}{2}}_{i}-\delta_x\bar{T}^{k-\frac{1}{2}}_{{M_s}-\frac{1}{2}}\bar{W}^{k-\frac{1}{2}}_{M_s}+\delta_x\bar{T}^{k-\frac{1}{2}}_{\frac{1}{2}}\bar{W}^{k-\frac{1}{2}}_0=-h\sum^{M_s}_{i=1}\delta_x\bar{T}^{k-\frac{1}{2}}_{i-\frac{1}{2}}\delta_{x}\bar{W}^{k-\frac{1}{2}}_{i-\frac{1}{2}},\nonumber
\end{aligned}
\end{equation}
and
\begin{equation}
\begin{aligned}
&h\sum^{M_s-1}_{i=1}\bar{W}^{k-\frac{1}{2}}_{i}\delta_{xx}\bar{W}^{k-\frac{1}{2}}_i-\delta_{x}\bar{W}^{k-\frac{1}{2}}_{{M_s}-\frac{1}{2}}\bar{W}^{k-\frac{1}{2}}_{M_s}+\delta_{x}\bar{W}^{k-\frac{1}{2}}_{\frac{1}{2}}\bar{W}^{k-\frac{1}{2}}_0=-\|\bar{W}^{k-\frac{1}{2}}\|_1^2,\nonumber
\end{aligned}
\end{equation}
(\ref{eqs4_9}) can be reformulated as
\begin{equation}\label{eqs4_10}
\begin{aligned}
&\|\bar{W}^{k-\frac{1}{2}}\|^2+a\langle \bar{W}^{k-\frac{1}{2}},\delta_t\bar{W}^{k-\frac{1}{2}}\rangle+\sqrt{K}z_0^{\frac{3}{2}} \sum_{n=1}^N \frac{a_n}{b_n} \sigma^{k-\frac{1}{2}}_n\bar{W}^{k-\frac{1}{2}}_{M_s}\\
=&-Kh\sum^{M_s}_{i=1}\delta_x\bar{T}^{k-\frac{1}{2}}_{i-\frac{1}{2}}\delta_{x}\bar{W}^{k-\frac{1}{2}}_{i-\frac{1}{2}}-Kb\|\bar{W}^{k-\frac{1}{2}}\|_1^2-\sqrt{z_0K}\left(1+\sum_{n=1}^N \frac{a_n}{b_n}\right) (\bar{W}^{k-\frac{1}{2}}_{M_s})^2
+2\sqrt{K}z_0^{\frac{3}{2}} \sum_{n=1}^N \frac{a_n}{b_n} \sigma^{k-\frac{1}{2}}_n\bar{W}^{k-\frac{1}{2}}_{M_s}\\
&+\langle \bar{W}^{k-\frac{1}{2}},g^{k-\frac{1}{2}}\rangle+KQ^{k-\frac{1}{2}}\bar{W}^{k-\frac{1}{2}}_{M_s}.
\end{aligned}
\end{equation}
According to (\ref{eqs4_4.6f}), the replacement of $\bar{W}^{k-\frac{1}{2}}_{M_s}$ in the third term on the left hand side of (\ref{eqs4_10}) leads to
\begin{equation}\label{eqs4_11}
\begin{aligned}
&\|\bar{W}^{k-\frac{1}{2}}\|^2+a\langle \bar{W}^{k-\frac{1}{2}},\delta_t\bar{W}^{k-\frac{1}{2}}\rangle+\sqrt{K}z_0^{\frac{3}{2}} \sum_{n=1}^N a_n \sigma^{k-\frac{1}{2}}_n\left[Z^{k-\frac{1}{2}}_n+(a+b)\sigma^{k-\frac{1}{2}}_n+ab\delta_t\sigma^{k-\frac{1}{2}}_n\right]\\
=&-Kh\sum^{M_s}_{i=1}\delta_x\bar{T}^{k-\frac{1}{2}}_{i-\frac{1}{2}}\delta_{x}\bar{W}^{k-\frac{1}{2}}_{i-\frac{1}{2}}-Kb\|\bar{W}^{k-\frac{1}{2}}\|_1^2\\
&-\sqrt{K}\Bigg[\sqrt{z_0}\left(1+\sum_{n=1}^N \frac{a_n}{b_n}\right) (\bar{W}^{k-\frac{1}{2}}_{M_s})^2
-2z_0^{\frac{3}{2}} \sum_{n=1}^N \frac{a_n}{b_n} \sigma^{k-\frac{1}{2}}_n\bar{W}^{k-\frac{1}{2}}_{M_s}+z_0^{\frac{5}{2}} \sum_{n=1}^N \frac{a_n(1-b_n)}{b_n}(\sigma^{k-\frac{1}{2}}_n)^2\Bigg]\\
&+\langle W^{k-\frac{1}{2}},g^{k-\frac{1}{2}}\rangle
+\sqrt{K}z_0^{\frac{3}{2}} \sum_{n=1}^N \frac{a_n}{b_n} \sigma^{k-\frac{1}{2}}_nr^{k-\frac{1}{2}}+KQ^{k-\frac{1}{2}}\bar{W}^{k-\frac{1}{2}}_{M_s}.
\end{aligned}
\end{equation}
Similar to (\ref{eqs3_9}), the third term on the right hand side of (\ref{eqs4_11}) can be transformed into the following form
\begin{equation}
\begin{aligned}
&-\sqrt{z_0}(1+\sum_{n=1}^N \frac{a_n}{b_n})(\bar{W}^{k-\frac{1}{2}}_{M_s})^2+2z_0^{\frac{3}{2}} \sum_{n=1}^N \frac{a_n}{b_n} \sigma_n^{k-\frac{1}{2}}\bar{W}^{k-\frac{1}{2}}_{M_s}-z_0^{\frac{5}{2}} \sum_{n=1}^N \frac{a_n(1-b_n)}{b_n}(\sigma
^{k-\frac{1}{2}}_n)^2\nonumber\\
=&-\sum_{n=1}^N \frac{2\sqrt{z_0}}{(2N+1)b_n}\left[(1-b_n)z_0\sigma
^{k-\frac{1}{2}}_n-\bar{W}^{k-\frac{1}{2}}_{M_s} \right]^2- \frac{\sqrt{z_0}}{2N+1}(\bar{W}^{k-\frac{1}{2}}_{M_s})^2,\nonumber
\end{aligned}
\end{equation}
we can therefore obtain
\begin{equation}\label{eqs4_12}
\begin{aligned}
&\|\bar{W}^{k-\frac{1}{2}}\|^2+a\langle \bar{W}^{k-\frac{1}{2}},\delta_t\bar{W}^{k-\frac{1}{2}}\rangle+\sqrt{K}z_0^{\frac{3}{2}} \sum_{n=1}^N a_n \sigma^{k-\frac{1}{2}}_nZ^{k-\frac{1}{2}}_n+\sqrt{K}z_0^{\frac{3}{2}} \sum_{n=1}^N a_n \sigma^{k-\frac{1}{2}}_n\left[(a+b)\sigma^{k-\frac{1}{2}}_n+ab\delta_t\sigma^{k-\frac{1}{2}}_n\right]\\
\leq&-Kh\sum^{M_s}_{i=1}\delta_x\bar{T}^{k-\frac{1}{2}}_{i-\frac{1}{2}}\delta_{x}\bar{W}^{k-\frac{1}{2}}_{i-\frac{1}{2}}-Kb\|\bar{W}^{k-\frac{1}{2}}\|_1^2
-\sqrt{K}\frac{\sqrt{z_0}}{2N+1}(\bar{W}^{k-\frac{1}{2}}_{M_s})^2+\sqrt{K}z_0^{\frac{3}{2}} \sum_{n=1}^N \frac{a_n}{b_n} \sigma^{k-\frac{1}{2}}_nr^{k-\frac{1}{2}}+\langle \bar{W}^{k-\frac{1}{2}},g^{k-\frac{1}{2}}\rangle\\
&+KQ^{k-\frac{1}{2}}\bar{W}^{k-\frac{1}{2}}_{M_s}.
\end{aligned}
\end{equation}
Multiplying both sides of (\ref{eqs4_12}) by $\Delta t$ and summing from $k=1$ to $m$, and using (\ref{eqs4_4.6i}) to replace $\sigma^{k-\frac{1}{2}}_n$ in the third term on the left hand side of (\ref{eqs4_12}), it yields
\begin{equation}\label{eqs4_13}
\begin{aligned}
&\Delta t\sum^m_{k=1}\|\bar{W}^{k-\frac{1}{2}}\|^2+a\Delta t\sum^m_{k=1}\langle \bar{W}^{k-\frac{1}{2}},\delta_t\bar{W}^{k-\frac{1}{2}}\rangle+Kb\Delta t\sum^m_{k=1}\|\bar{W}^{k-\frac{1}{2}}\|_1^2\\
&+\sqrt{K}z_0^{\frac{3}{2}} \sum_{n=1}^N a_n \left[\frac{1}{2}(Z_n^m)^2+\Delta t\sum^m_{k=1} G^{k-\frac{1}{2}}_1Z^{k-\frac{1}{2}}_n+(a+b)\Delta t\sum^m_{k=1} (\sigma^{k-\frac{1}{2}}_n)^2+\frac{ab}{2}(\sigma_n^m)^2\right]\\
\leq&-Kh\sum^{M_s}_{i=1}\Delta t\sum^m_{k=1}\delta_x\bar{T}^{k-\frac{1}{2}}_{i-\frac{1}{2}}\delta_{x}\bar{W}^{k-\frac{1}{2}}_{i-\frac{1}{2}}
+\Delta t\sum^m_{k=1}\langle \bar{W}^{k-\frac{1}{2}},g^{k-\frac{1}{2}}\rangle-\sqrt{K}\Delta t\sum^m_{k=1}\frac{\sqrt{z_0}}{2N+1}(\bar{W}^{k-\frac{1}{2}}_{M_s})^2
\\
&+\sqrt{K}z_0^{\frac{3}{2}} \sum_{n=1}^N \frac{a_n}{b_n}\Delta t\sum^m_{k=1} \sigma^{k-\frac{1}{2}}_nr^{k-\frac{1}{2}}+K\Delta t\sum^m_{k=1} Q^{k-\frac{1}{2}}\bar{W}^{k-\frac{1}{2}}_{M_s}.
\end{aligned}
\end{equation}
Noting that
\begin{equation}\label{eqS4_15}
\begin{aligned}
a\Delta t\sum^m_{k=1}\langle \bar{W}^{k-\frac{1}{2}},\delta_{t}\bar{W}^{k-\frac{1}{2}}\rangle
&=\frac{a}{2}\|\bar{W}^{m}\|^2-\frac{a}{2}\|\eta_1\|^2,
\end{aligned}
\end{equation}
\begin{equation}\label{eqs4_16}
\begin{aligned}
\langle \bar{W}^{k-\frac{1}{2}},g^{k-\frac{1}{2}}\rangle=h\sum^{M_s-1}_{i=1}\bar{W}^{k-\frac{1}{2}}_{i}g^{k-\frac{1}{2}}_i+\frac{h}{2}\bar{W}^{k-\frac{1}{2}}_{M_s}g^{k-\frac{1}{2}}_{M_s},
\end{aligned}
\end{equation}
and it follows from (\ref{eqs4_4.6h}) that
\begin{equation}\label{eqs4_14}
\begin{aligned}
h\sum^{M_s}_{i=1}\Delta t\sum^m_{k=1}\delta_x\bar{T}^{k-\frac{1}{2}}_{i-\frac{1}{2}}\delta_{x}\bar{W}^{k-\frac{1}{2}}_{i-\frac{1}{2}}&=h\Delta t\sum^m_{k=1}\sum^{M_s}_{i=1}\delta_x\bar{T}^{k-\frac{1}{2}}_{i-\frac{1}{2}}\delta_{x}\delta_t\bar{T}^{k-\frac{1}{2}}_{i-\frac{1}{2}}+h\Delta t\sum^m_{k=1}\sum^{M_s}_{i=1}\delta_x\bar{T}^{k-\frac{1}{2}}_{i-\frac{1}{2}}G^{k-\frac{1}{2}}\\
&=\frac{1}{2}h\sum^m_{k=1}\sum^{M_s}_{i=1}[(\delta_x\bar{T}^k_{i-\frac{1}{2}})^2-(\delta_x\bar{T}^{k-1}_{i-\frac{1}{2}})^2]+h\Delta t\sum^m_{k=1}\sum^{M_s}_{i=1}\delta_x\bar{T}^{k-\frac{1}{2}}_{i-\frac{1}{2}}G_{i-\frac{1}{2}}^{k-\frac{1}{2}}\\
&=\frac{1}{2}\|\bar{T}^m\|_1^2-\frac{1}{2}\|\xi_1\|_1^2+h\Delta t\sum^m_{k=1}\sum^{M_s}_{i=1}\delta_x\bar{T}^{k-\frac{1}{2}}_{i-\frac{1}{2}}G_{i-\frac{1}{2}}^{k-\frac{1}{2}}.
\end{aligned}
\end{equation}
Therefore,
the substitution of (\ref{eqs4_14})--(\ref{eqs4_16}) into (\ref{eqs4_13}) results in
\begin{equation}\label{eqs4_17}
\begin{aligned}
&\Delta t\sum^m_{k=1}\|\bar{W}^{k-\frac{1}{2}}\|^2+\frac{a}{2}\|\bar{W}^{m}\|^2+Kb\Delta t\sum^m_{k=1}\|\bar{W}^{k-\frac{1}{2}}\|_1^2+\frac{K}{2}\|\bar{T}^m\|_1^2\\
&+\sqrt{K}z_0^{\frac{3}{2}} \sum_{n=1}^N a_n \left[\frac{1}{2}(Z_n^m)^2+\Delta t\sum^m_{k=1} G^{k-\frac{1}{2}}_1Z^{k-\frac{1}{2}}_n+(a+b)\Delta t\sum^m_{k=1} (\sigma^{k-\frac{1}{2}}_n)^2+\frac{ab}{2}(\sigma_n^m)^2\right]\\
\leq&\frac{K}{2}\|\xi_1\|_1^2+\frac{a}{2}\|\eta_1\|^2-Kh\Delta t\sum^m_{k=1}\sum^{M_s}_{i=1}\delta_x\bar{T}^{k-\frac{1}{2}}_{i-\frac{1}{2}}G_{i-\frac{1}{2}}^{k-\frac{1}{2}}
+\Delta t\sum^m_{k=1}h\sum^{M_s-1}_{i=1}\bar{W}^{k-\frac{1}{2}}_{i}g^{k-\frac{1}{2}}_i+\Delta t\sum^m_{k=1}\frac{h}{2}\bar{W}^{k-\frac{1}{2}}_{M_s}g^{k-\frac{1}{2}}_{M_s}\\
&-\sqrt{K}\Delta t\sum^m_{k=1}\frac{\sqrt{z_0}}{2N+1}(\bar{W}^{k-\frac{1}{2}}_{M_s})^2
+\sqrt{K}z_0^{\frac{3}{2}} \sum_{n=1}^N \frac{a_n}{b_n}\Delta t\sum^m_{k=1} \sigma^{k-\frac{1}{2}}_nr^{k-\frac{1}{2}}+K\Delta t\sum^m_{k=1} Q^{k-\frac{1}{2}}\bar{W}^{k-\frac{1}{2}}_{M_s}.
\end{aligned}
\end{equation}

Since the Cauchy-Schwarz inequality implies that
\begin{equation}\label{eqs4_18}
\begin{aligned}
&\Delta t\sum^m_{k=1} G^{k-\frac{1}{2}}_1Z^{k-\frac{1}{2}}_n\leq\frac{1}{2}\Delta t\sum^m_{k=1}(G^{k-\frac{1}{2}}_1)^2+\frac{1}{2}\Delta t\sum^m_{k=1}(Z^{k-\frac{1}{2}}_n)^2,\end{aligned}
\end{equation}
\begin{equation}\label{eqs4_19}
\begin{aligned}
h\Delta t\sum^m_{k=1}\sum^{M_s}_{i=1}\delta_x\bar{T}^{k-\frac{1}{2}}_{i-\frac{1}{2}}G_{i-\frac{1}{2}}^{k-\frac{1}{2}}\leq\frac{1}{2}\Delta t\sum^m_{k=1}\|\bar{T}^{k-\frac{1}{2}}\|_1^2+\frac{1}{2}\Delta t\sum^m_{k=1}\|G^{k-\frac{1}{2}}\|^2,
\end{aligned}
\end{equation}
\begin{equation}\label{eqs4_20}
\begin{aligned}
h\sum^{M_s-1}_{i=1}\bar{W}^{k-\frac{1}{2}}_{i}g^{k-\frac{1}{2}}_i\leq\frac{1}{2}\|\bar{W}^{k-\frac{1}{2}}\|^2+\frac{1}{2}\|g^{k-\frac{1}{2}}\|^2,
\end{aligned}
\end{equation}
\begin{equation}\label{eqs4_21}
\begin{aligned}
\frac{h}{2}\bar{W}^{k-\frac{1}{2}}_{M_s}g^{k-\frac{1}{2}}_{M_s}\leq\frac{\sqrt{K}\sqrt{z_0}}{2(2N+1)}(\bar{W}^{k-\frac{1}{2}}_{M_s})^2+\frac{(2N+1)}{8\sqrt{K}\sqrt{z_0}}(hg^{k-\frac{1}{2}}_{M_s})^2,
\end{aligned}
\end{equation}
\begin{equation}\label{eqs4_22}
\begin{aligned}
\sqrt{K}z_0^{\frac{3}{2}} \sum_{n=1}^N \frac{a_n}{b_n}\Delta t\sum^m_{k=1} \sigma^{k-\frac{1}{2}}_nr^{k-\frac{1}{2}}
\leq\frac{\sqrt{K}}{4(a+b)}\Delta t\sum^m_{k=1}z_0^{\frac{3}{2}} \sum_{n=1}^N a_n(\frac{1}{b_n}r^{k-\frac{1}{2}})^2+\sqrt{K}(a+b)\Delta t\sum^m_{k=1}z_0^{\frac{3}{2}} \sum_{n=1}^N a_n(\sigma^{k-\frac{1}{2}}_n)^2,
\end{aligned}
\end{equation}
and
\begin{equation}\label{eqs4_23}
\begin{aligned}
&K\Delta t\sum^m_{k=1} Q^{k-\frac{1}{2}}\bar{W}^{k-\frac{1}{2}}_{M_s}\leq K^{\frac{3}{2}}\Delta t\sum^m_{k=1} \frac{2N+1}{2\sqrt{z_0}}(Q^{k-\frac{1}{2}})^2+\sqrt{K}\Delta t\sum^m_{k=1}\frac{\sqrt{z_0}}{2(2N+1)}(\bar{W}^{k-\frac{1}{2}}_{M_s})^2,
\end{aligned}
\end{equation}
by substituting (\ref{eqs4_18})--(\ref{eqs4_23}) into (\ref{eqs4_17}), it leads to
\begin{equation}
\begin{aligned}
&\frac{1}{2}\Delta t\sum^m_{k=1}\|\bar{W}^{k-\frac{1}{2}}\|^2+\frac{a}{2}\|\bar{W}^{m}\|^2-\frac{a}{2}\|\eta_1\|^2+Kb\Delta t\sum^m_{k=1}\|\bar{W}^{k-\frac{1}{2}}\|_1^2+\frac{K}{2}\|\bar{T}^{m}\|_1^2\\
&+\sqrt{K}z_0^{\frac{3}{2}} \sum_{n=1}^N a_n \left[\frac{1}{2}(Z_n^m)^2+\frac{ab}{2}(\sigma_n^m)^2\right]\\
\leq&\frac{K}{2}\|\xi_1\|_1^2
+\frac{\Delta t}{2}\sum^m_{k=1}\|g^{k-\frac{1}{2}}\|^2+\frac{(2N+1)}{8\sqrt{K}\sqrt{z_0}}\Delta t\sum^m_{k=1}(hg_{M_s}^{k-\frac{1}{2}})^2+K^{\frac{3}{2}}\Delta t\sum^m_{k=1} \frac{2N+1}{2\sqrt{z_0}}(Q^{k-\frac{1}{2}})^2
\\
&+\frac{\sqrt{K}}{4(a+b)}\Delta t\sum^m_{k=1}z_0^{\frac{3}{2}} \sum_{n=1}^N a_n(\frac{1}{b_n}r^{k-\frac{1}{2}})^2+\frac{K}{2}\Delta t\sum^m_{k=1}\|\bar{T}^{k-\frac{1}{2}}\|_1^2+\frac{K}{2}\Delta t\sum^m_{k=1}\|G^{k-\frac{1}{2}}\|^2\\
&+ \frac{\sqrt{K}z_0^{\frac{3}{2}}}{2}\sum_{n=1}^N a_n\Delta t\sum^m_{k=1} \left[(G^{k-\frac{1}{2}}_1)^2+(Z^{k-\frac{1}{2}}_n)^2\right].\nonumber
\end{aligned}
\end{equation}
After some trivial manipulation, we further obtain
\begin{equation}\label{eqs4_24}
\begin{aligned}
&\frac{K}{2}\|\bar{T}^{m}\|_1^2
+\sqrt{K}z_0^{\frac{3}{2}} \sum_{n=1}^N a_n \frac{1}{2}(Z_n^m)^2\\
\leq&\Delta t\sum^m_{k=1}\left[\frac{K}{2}\|\bar{T}^{k-\frac{1}{2}}\|_1^2+\frac{\sqrt{K}z_0^{\frac{3}{2}}}{2}\sum_{n=1}^N a_n(Z^{k-\frac{1}{2}}_n)^2\right]+
\frac{K}{2}\|\xi_1\|_1^2
+\frac{\Delta t}{2}\sum^m_{k=1}\|g^{k-\frac{1}{2}}\|^2+\frac{(2N+1)}{8\sqrt{K}\sqrt{z_0}}\Delta t\sum^m_{k=1}(hg_{M_s}^{k-\frac{1}{2}})^2\\
&+K^{\frac{3}{2}}\Delta t\sum^m_{k=1} \frac{2N+1}{2\sqrt{z_0}}(Q^{k-\frac{1}{2}})^2
+\frac{\sqrt{K}}{4(a+b)}\Delta t\sum^m_{k=1}z_0^{\frac{3}{2}} \sum_{n=1}^N a_n(\frac{1}{b_n}r^{k-\frac{1}{2}})^2+\frac{K}{2}\Delta t\sum^m_{k=1}\|G^{k-\frac{1}{2}}\|^2
\\
&+ \frac{\sqrt{K}z_0^{\frac{3}{2}}}{2}\sum_{n=1}^N a_n\Delta t\sum^m_{k=1} (G^{k-\frac{1}{2}}_1)^2+\frac{a}{2}\|\eta_1\|^2.
\end{aligned}
\end{equation}
It follows from Lemma \ref{Lemma2} and (\ref{eqs4_24}) that
\begin{equation}
\begin{aligned}
&\frac{K}{2}\|\bar{T}^{m}\|_1^2
+\sqrt{K}z_0^{\frac{3}{2}} \sum_{n=1}^N a_n \frac{1}{2}(Z_n^m)^2\\
\leq&\Bigg\{
\frac{K}{2}\|\xi_1\|_1^2
+\frac{\Delta t}{2}\sum^m_{k=1}\|g^{k-\frac{1}{2}}\|^2+\frac{(2N+1)}{8\sqrt{K}\sqrt{z_0}}\Delta t\sum^m_{k=1}(hg_{M_s}^{k-\frac{1}{2}})^2+K^{\frac{3}{2}}\Delta t\sum^m_{k=1} \frac{2N+1}{2\sqrt{z_0}}(Q^{k-\frac{1}{2}})^2
\\
&+\frac{\sqrt{K}}{4(a+b)}\Delta t\sum^m_{k=1}z_0^{\frac{3}{2}} \sum_{n=1}^N a_n(\frac{1}{b_n}r^{k-\frac{1}{2}})^2+\frac{K}{2}\Delta t\sum^m_{k=1}\|G^{k-\frac{1}{2}}\|^2+ \frac{\sqrt{K}z_0^{\frac{3}{2}}}{2}\sum_{n=1}^N a_n\Delta t\sum^m_{k=1} (G^{k-\frac{1}{2}}_1)^2+\frac{a}{2}\|\eta_1\|^2\Bigg\}e^D,\nonumber
\end{aligned}
\end{equation}
i.e.,
\begin{equation}
\begin{aligned}
\|\bar{T}^{m}\|_1^2
\leq&\Bigg\{
\frac{K}{2}\|\xi_1\|_1^2
+\frac{\Delta t}{2}\sum^m_{k=1}\|g^{k-\frac{1}{2}}\|^2+\frac{(2N+1)}{8\sqrt{K}\sqrt{z_0}}\Delta t\sum^m_{k=1}(hg_{M_s}^{k-\frac{1}{2}})^2+K^{\frac{3}{2}}\Delta t\sum^m_{k=1} \frac{2N+1}{2\sqrt{z_0}}(Q^{k-\frac{1}{2}})^2
\\
&+\frac{\sqrt{K}}{4(a+b)}\Delta t\sum^m_{k=1}z_0^{\frac{3}{2}} \sum_{n=1}^N a_n(\frac{1}{b_n}r^{k-\frac{1}{2}})^2+\frac{K}{2}\Delta t\sum^m_{k=1}\|G^{k-\frac{1}{2}}\|^2+ \frac{\sqrt{K}z_0^{\frac{3}{2}}}{2}\sum_{n=1}^N a_n\Delta t\sum^m_{k=1} (G^{k-\frac{1}{2}}_1)^2\\
&+\frac{a}{2}\|\eta_1\|^2\Bigg\}\frac{2e^D}{K}.\nonumber
\end{aligned}
\end{equation}
From Lemma \ref{Lemma3}, it arrives at (\ref{eqs3_30}) immediately.
\end{proof}

Now we state the stability of the developed numerical scheme. By observing Lemma \ref{Lemma4}, we can easily obtain the following stability result by replacing
$g_i^{k-\frac{1}{2}}, r^{k-\frac{1}{2}}$ with $F_i^{k-\frac{1}{2}}, R^{k-\frac{1}{2}}$, respectively, and letting $G_{i-\frac{1}{2}}^{k-\frac{1}{2}}=0.$
\begin{thm}
Assume that $\{\bar{T}^k_{i}|0\leq i\leq M_s,0\leq k\leq M_t\}$ is the solution to the numerical scheme (\ref{eqn4_5}). Then for $1\leq m\leq M_t$, one has
\begin{equation}
\begin{aligned}
\|\bar{T}^{m}\|_{\infty}^2
\leq&\Bigg\{
\frac{K}{2}\|\xi_1\|_1^2
+\frac{\Delta t}{2}\sum^m_{k=1}\|F^{k-\frac{1}{2}}\|^2+\frac{(2N+1)}{8\sqrt{K}\sqrt{z_0}}\Delta t\sum^m_{k=1}(hF_{M_s}^{k-\frac{1}{2}})^2+K^{\frac{3}{2}}\Delta t\sum^m_{k=1} \frac{2N+1}{2\sqrt{z_0}}(Q^{k-\frac{1}{2}})^2
\nonumber\\
&+\frac{\sqrt{K}}{4(a+b)}\Delta t\sum^m_{k=1}z_0^{\frac{3}{2}} \sum_{n=1}^N a_n(\frac{1}{b_n}R^{k-\frac{1}{2}})^2+\frac{a}{2}\|\eta_1\|^2\Bigg\}\frac{2e^D}{K}.
\nonumber
\end{aligned}
\end{equation}
\end{thm}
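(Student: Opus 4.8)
The plan is to obtain this estimate as an immediate consequence of Lemma~\ref{Lemma4}, which already encapsulates the entire energy argument. First I would check that the finite difference scheme (\ref{eqn4_5}) is exactly the special instance of the perturbed scheme (\ref{eqn4_6}) corresponding to the choices $g_i^{k-\frac12}=F_i^{k-\frac12}$ (both in the interior equation (\ref{eqs4_4.6a}) and in the boundary equation (\ref{eqs4_4.6e})), $r^{k-\frac12}=R^{k-\frac12}$ in (\ref{eqs4_4.6f}), and $G_{i-\frac12}^{k-\frac12}=0$, $G_1^{k-\frac12}=0$ in (\ref{eqs4_4.6h})--(\ref{eqs4_4.6i}); the initial data (\ref{eqs4_4.6b})--(\ref{eqs4_4.6c}), the homogeneous Dirichlet condition (\ref{eqs4_4.6d}), the boundary-layer term involving $Q^{k-\frac12}$, and the coupling through the auxiliary variables $\sigma_n,Z_n$ are identical in the two schemes. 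With this identification in hand, I would simply substitute into the conclusion (\ref{eqs3_30}) of Lemma~\ref{Lemma4}: the two terms $\frac{K}{2}\Delta t\sum_{k=1}^m\|G^{k-\frac12}\|^2$ and $\frac{\sqrt{K}z_0^{3/2}}{2}\sum_{n=1}^N a_n\,\Delta t\sum_{k=1}^m (G_1^{k-\frac12})^2$ vanish, the $g$- and $r$-terms turn into the advertised $F$- and $R$-terms, and the remaining right-hand side multiplied by $2e^D/K$ is precisely the claimed bound on $\|\bar{T}^m\|_{\infty}^2$.

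The one point that genuinely needs care is the line-by-line verification that the discrete artificial-boundary relation (\ref{eqs4_4.5e}) of scheme (\ref{eqn4_5}) matches (\ref{eqs4_4.6e}) with $g^{k-\frac12}_{M_s}=F^{k-\frac12}_{M_s}$: in the derivation of (\ref{eqs4_1}) this equation also carried the truncation pieces $Q^{k-\frac12}_{M_s}$ and was fed by (\ref{eqs3_2710e}), so I would make sure that after dropping the local truncation errors the surviving forcing on the right-hand side is exactly $F^{k-\frac12}_{M_s}$ and not some combination of it with other boundary terms. A direct comparison of (\ref{eqs4_4.5e}) and (\ref{eqs4_4.6e}) confirms this.

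There is no real obstacle beyond this bookkeeping. All the hard work --- multiplying (\ref{eqs4_4.6a}) by $h\bar{W}_i^{k-\frac12}$ and summing, eliminating the boundary flux via (\ref{eqs4_4.6e})--(\ref{eqs4_4.6f}), the completion-of-squares identity mirroring (\ref{eqs3_9}), the Cauchy--Schwarz estimates, and the concluding Gr\"onwall step through Lemma~\ref{Lemma2} followed by the passage from the $\|\cdot\|_1$ seminorm to $\|\cdot\|_{\infty}$ via Lemma~\ref{Lemma3} --- has already been executed once and for all in the proof of Lemma~\ref{Lemma4}, so the proof of this theorem reduces to a clean substitution.
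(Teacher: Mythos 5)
Your proposal is correct and is exactly the paper's argument: the authors obtain this theorem by specializing Lemma \ref{Lemma4} with $g_i^{k-\frac12}=F_i^{k-\frac12}$, $r^{k-\frac12}=R^{k-\frac12}$, and the truncation terms $G_{i-\frac12}^{k-\frac12}$ and $G_1^{k-\frac12}$ set to zero, precisely as you describe. Your extra bookkeeping check that (\ref{eqs4_4.5e}) matches (\ref{eqs4_4.6e}) is sound and only makes the substitution more explicit than the paper's one-line justification.
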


Now we employ Lemma \ref{Lemma4} to investigate the convergence of the numerical scheme (\ref{eqn4_5}).
\begin{thm}
Assume that $\bar{T}(x_i,t_k)$ and $\bar{T}_{i}^{k}\{0\leq i\leq M_s,0\leq k\leq M_t\}$ are the solutions to (\ref{eqn-250925A}) and (\ref{eqn4_5}), respectively. Let $\{\bar{e}_{i}^{k}=\bar{T}(x_i,t_{k})-\bar{T}_{i}^{k}|0\leq i\leq M_s,0\leq k\leq M_t\}$. Then we have
\begin{subequations}\label{eqn-10}
\begin{align}
\|\bar{e}^{m}\|_{\infty}^2
\leq&C(\Delta t^4+h^4).
\nonumber
\end{align}
\end{subequations}
\end{thm}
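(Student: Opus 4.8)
The plan is to set up the error equations, recognise them as the system of Lemma~\ref{Lemma4} with a particular right-hand side, invoke the estimate (\ref{eqs3_30}), and then control the resulting data by the local truncation errors (\ref{eqs4_2})--(\ref{eqs4_3}).

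First I would write down the error system. By construction, the grid restriction $\{\bar T(x_i,t_k)\}$ of the exact solution of (\ref{eqn-250925A}), together with the exact auxiliary quantities $w=T_t$, $\zeta_n$ and $\varsigma_n=\zeta_n'$, satisfies (\ref{eqs4_1}) with the remainders $P_i^{k-\frac12}$, $G_{i-\frac12}^{k-\frac12}$, $Q_{M_s}^{k-\frac12}$, $R_{M_s}^{k-\frac12}$, $G_1^{k-\frac12}$ retained, whereas $\{\bar T_i^k\}$ and the discrete auxiliary quantities satisfy (\ref{eqn4_5}). Both systems are linear with the same coefficients, the same data $F,Q,R$ and the same initial values, so subtracting (\ref{eqn4_5}) from (\ref{eqs4_1}) shows that $\{\bar e_i^k=\bar T(x_i,t_k)-\bar T_i^k\}$, with its companion auxiliary errors, solves a system of exactly the form (\ref{eqn4_6}) with \emph{zero} initial data ($\xi_1\equiv\eta_1\equiv0$), with the artificial-boundary forcing $Q^{k-\frac12}$ in the boundary equation replaced by $0$ (it is common to both schemes and cancels), and with
\[
g_i^{k-\frac12}=P_i^{k-\frac12}\ (1\le i\le M_s-1),\qquad g_{M_s}^{k-\frac12}=Q_{M_s}^{k-\frac12},\qquad r^{k-\frac12}=R_{M_s}^{k-\frac12},
\]
while $G_{i-\frac12}^{k-\frac12}$ and $G_1^{k-\frac12}$ stay the truncation terms of (\ref{eqs4_1}). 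I expect this identification to be the main obstacle: one must match each remainder with the correct inhomogeneity in (\ref{eqn4_6}) and, crucially, notice that the local artificial-boundary term $Q^{k-\frac12}$ disappears from the error equations, so that it does not degrade the order of accuracy.

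With the error system in the form (\ref{eqn4_6}), applying (\ref{eqs3_30}) and discarding the terms that now vanish ($\xi_1$, $\eta_1$, $Q^{k-\frac12}$) gives
\[
\|\bar e^{m}\|_\infty^2\le\frac{2e^D}{K}\Big\{\tfrac{\Delta t}{2}\sum_{k=1}^m\|g^{k-\frac12}\|^2+\tfrac{2N+1}{8\sqrt K\sqrt{z_0}}\Delta t\sum_{k=1}^m(hg_{M_s}^{k-\frac12})^2+\tfrac{\sqrt K}{4(a+b)}\Delta t\sum_{k=1}^m z_0^{\frac32}\sum_{n=1}^N a_n\big(\tfrac{1}{b_n}r^{k-\frac12}\big)^2+\tfrac K2\Delta t\sum_{k=1}^m\|G^{k-\frac12}\|^2+\tfrac{\sqrt K z_0^{\frac32}}{2}\sum_{n=1}^N a_n\Delta t\sum_{k=1}^m(G_1^{k-\frac12})^2\Big\}.
\]
Then I would insert $|P_i^{k-\frac12}|,|G_{i-\frac12}^{k-\frac12}|\le C(\Delta t^2+h^2)$, $|Q_{M_s}^{k-\frac12}|\le C(\Delta t^2+h)$, $|R_{M_s}^{k-\frac12}|,|G_1^{k-\frac12}|\le C\Delta t^2$ from (\ref{eqs4_2})--(\ref{eqs4_3}), use $h(M_s-1)<hM_s=x_r$, $\Delta t\sum_{k=1}^m\le D$, and the boundedness of $a_n$ and $1/b_n$ for fixed $N$. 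The $\|g^{k-\frac12}\|^2$-term, in which only the interior truncation $P_i^{k-\frac12}$ enters at the relevant order (so $h\sum_{i=1}^{M_s-1}(P_i^{k-\frac12})^2\le C(\Delta t^2+h^2)^2$), and the $\|G^{k-\frac12}\|^2$-term are each $\le C(\Delta t^4+h^4)$; the $r$- and $G_1$-terms are $\le C\Delta t^4$; and $(hg_{M_s}^{k-\frac12})^2\le Ch^2(\Delta t^2+h)^2\le 2Ch^2(\Delta t^4+h^2)\le C(\Delta t^4+h^4)$ since $h\le1$. Summing over $k$, each sum costing at most a factor $D$, yields $\|\bar e^m\|_\infty^2\le C(\Delta t^4+h^4)$, as claimed. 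The only slightly delicate arithmetic is this last estimate: the boundary truncation $Q_{M_s}^{k-\frac12}$ is merely first order in $h$, but since it enters the bound always carrying a factor $h$ (hence, once squared, a factor $h^2$), the fourth-order bound $C(\Delta t^4+h^4)$ is recovered.
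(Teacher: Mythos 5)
Your proposal follows essentially the same route as the paper: form the error system by subtracting the scheme from the truncated equations, recognise it as an instance of the auxiliary system of Lemma \ref{Lemma4} with zero initial data and right-hand sides given by the local truncation errors ($g_i^{k-\frac12}=P_i^{k-\frac12}$, $g_{M_s}^{k-\frac12}=Q_{M_s}^{k-\frac12}$, $r^{k-\frac12}=R_{M_s}^{k-\frac12}$, with $Q^{k-\frac12}$ and $R^{k-\frac12}$ cancelling), apply (\ref{eqs3_30}), and insert the bounds (\ref{eqs4_2})--(\ref{eqs4_3}), including the same observation that the first-order-in-$h$ boundary truncation $Q_{M_s}^{k-\frac12}$ is harmless because it always appears multiplied by $h$. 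The argument is correct and matches the paper's proof.
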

\begin{proof}
Let $
E_n^{k-\frac{1}{2}}=\zeta_n(t_{k-\frac{1}{2}})-{Z}_n^{k-\frac{1}{2}},
\epsilon_n^{k-\frac{1}{2}}=\varsigma_n(t_{k-\frac{1}{2}})-{\sigma}_n^{k-\frac{1}{2}},\bar{\omega}_{i}^{k-\frac{1}{2}}=\bar{W}(x_i,t_{k-\frac{1}{2}})-\bar{W}_{i}^{k-\frac{1}{2}},0\leq i\leq M_s,1\leq k\leq M_t.$
Note that
\begin{equation}
    \begin{aligned}
\sum_{n=1}^{N}\frac{a_n}{b_n^2}
=&\sum_{n=1}^{N}\frac{2(1-b_n)}{(2N+1)b_n^2}
\leq \sum_{n=1}^{N}\frac{2}{(2N+1)b_n^2}
\leq \frac{1}{b^2_N},\nonumber
    \end{aligned}
\end{equation}
and
\begin{equation}
    \begin{aligned}
\sum_{n=1}^{N}a_n=&\sum_{n=1}^{N}\frac{2}{2N+1}\sin^2(\frac{n\pi}{2N+1})
\leq\sum_{n=1}^{N}\frac{2}{2N+1}<1.\nonumber
    \end{aligned}
\end{equation}
Then error $e_i^k$ satisfies
\begin{equation}
\left\{\begin{aligned}
&\bar{\omega}^{k-\frac{1}{2}}_i+a\delta_t\bar{\omega}^{k-\frac{1}{2}}_i=K\left(\delta^2_x\bar{e}^{k-\frac{1}{2}}_i+b\delta^2_x\bar{\omega}^{k-\frac{1}{2}}_i\right)+P^{k-\frac{1}{2}}_i,0\leq i \leq M_s,1\leq k\leq M_t,\\
&\bar{e}^0_i=0,0\leq i \leq M_s\\
&\bar{\omega}^0_i=0,0\leq i \leq M_s\\
&\bar{e}^{k-\frac{1}{2}}_0=0,1\leq k\leq M_t,\\
&\bar{\omega}^{k-\frac{1}{2}}_{M_s}+a\delta_t\bar{\omega}^{k-\frac{1}{2}}_{M_s}=\frac{2K}{h}\Bigg\{-\delta_x\bar{e}^{k-\frac{1}{2}}_{{M_s}-\frac{1}{2}}-b\delta_{x}\bar{\omega}^{k-\frac{1}{2}}_{{M_s}-\frac{1}{2}}-\sqrt{\frac{z_0}{K}}\Bigg[\left(1+\sum_{n=1}^N \frac{a_n}{b_n}\right) \bar{\omega}^{k-\frac{1}{2}}_{M_s}-z_0 \sum_{n=1}^N \frac{a_n}{b_n} \epsilon^{k-\frac{1}{2}}_n\Bigg]\Bigg\}\nonumber\\
&+Q^{k-\frac{1}{2}}_{M_s}
,1\leq k\leq M_t, \\
&(z_0-b_n z_0)\epsilon^{k-\frac{1}{2}}_n+b_n\left[E^{k-\frac{1}{2}}_n+(a+b)\epsilon^{k-\frac{1}{2}}_n+ab\delta_t\epsilon^{k-\frac{1}{2}}_n\right]=\bar{\omega}^{k-\frac{1}{2}}_{M_s}+R^{k-\frac{1}{2}}_{M_s},1\leq k\leq M_t, \\
&E^0_n=\epsilon^0_n=0,(n=1,\dots,N),\\
&\delta_x\bar{\omega}^{k-\frac{1}{2}}_{i-\frac{1}{2}}-\delta_x\delta_t\bar{e}^{k-\frac{1}{2}}_{i-\frac{1}{2}}=G^{k-\frac{1}{2}}_{i-\frac{1}{2}},0\leq i \leq M_s,1\leq k\leq M_t,\\
&\epsilon_n^{k-\frac{1}{2}}-\delta_{t}E_n^{k-\frac{1}{2}}=G^{k-\frac{1}{2}}_1,1\leq k\leq M_t.\nonumber
\end{aligned}\right.
\end{equation}
Using Lemma \ref{Lemma4} and (\ref{eqs4_2})--(\ref{eqs4_4}), it is clear that
\begin{subequations}\label{eqn-10}
\begin{align}
\|\bar{e}^{m}\|_{\infty}^2
\leq&\Bigg\{
\frac{\Delta t}{2}\sum^m_{k=1}\|P^{k-\frac{1}{2}}\|^2+\frac{(2N+1)}{8\sqrt{K}\sqrt{z_0}}\Delta t\sum^m_{k=1}(hQ_{M_s}^{k-\frac{1}{2}})^2
+\frac{\sqrt{K}}{4(a+b)}\Delta t\sum^m_{k=1}z_0^{\frac{3}{2}} \sum_{n=1}^N a_n(\frac{1}{b_n}R_{M_s}^{k-\frac{1}{2}})^2\nonumber\\
&+\frac{K}{2}\Delta t\sum^m_{k=1}\|G^{k-\frac{1}{2}}\|^2
+ \frac{\sqrt{K}z_0^{\frac{3}{2}}}{2}\sum_{n=1}^N a_n\Delta t\sum^m_{k=1} (G^{k-\frac{1}{2}}_1)^2\Bigg\}\frac{2e^D}{K}\nonumber\\
\leq&\Bigg\{
\frac{1}{2}(\Delta t^2+h^2)^2+\frac{(2N+1)}{8\sqrt{K}\sqrt{z_0}}h^2(\Delta t^2+h)^2
+\frac{\sqrt{K}}{4(a+b)}z_0^{\frac{3}{2}}\frac{1}{b^2_N}(\Delta t^2)^2+\frac{K}{2}(\Delta t^2+h^2)^2
\nonumber\\
&+ \frac{\sqrt{K}z_0^{\frac{3}{2}}}{2} (\Delta t^2)^2\Bigg\}\frac{2C^2De^D}{K}\nonumber\\
\leq&C(\Delta t^4+h^4).\nonumber
\end{align}
\end{subequations}
\end{proof}

\section{Numerical examples}\label{Sec4}
This section presents several numerical examples to illustrate the numerical accuracy and effectiveness of the developed finite difference scheme (\ref{eqn4_5}).
To test the convergence of the proposed numerical scheme, we define the error, and the temporal and spatial convergence rates by
$$E(h,\tau)=\max\limits_{1\leq i\leq M_s}|{
T}(x_i,D)-\bar{
T}_i^{M_t}|,rate_t=log_2\frac{E(h,\tau)}{E(h,\tau/2)},rate_s=log_2\frac{E(h,\tau)}{E(h/2,\tau)},$$
where $
T(x_i,D)$ is the exact solution at $(x_i,D)$, and $\bar{
T}_i^{M_t}$ denotes the corresponding numerical solution based on spatial step-size $h$ and temporal step-size $\tau$.
\textbf{Example 1.}
For the problem (\ref{eqs1_3}), let $K=D=1, T(x,t)=
3e^{-x^2-2t}$ be the exact solution with the source function $f(x,t)=6[-1+2a-K(2x^2-1)(1-2b)]e^{-x^2-2t}$ and the initial and boundary value functions $\phi(t)=3e^{-2t},\xi(x)=3e^{-x^2},\eta(x)=-6e^{-x^2}$.

Now we employ a popular technique (e.g., as mentioned in \cite{Gao2013finite,Han2006ARTIFICIAL,Zheng2008Numerical,
Zhang2018Numerical}) to detect numerical precision.
Noticing that $e^{-x^2}, x\geq7$ is quite small and equals zero
with a tolerance less than $10^{-15}.$
Thus we set the computational domain $\Omega_2=[0,7]$, and use the developed numerical scheme to calculate the present problem. Here, two cases are considered for the values of $a$ and $b$ in (\ref{eqs1_3}), i.e.,
$a=1,b=0.2$ and $a=2,b=3$. First, we take $h=\tau$ in Table \ref{tab:1} to observe the convergence rates.
Then, $h=\tau/50$ is chosen in Table \ref{tab:2} to make the temporal error in $ E(h,\tau) $ more apparent, and $\tau=h/50$ is set in Table \ref{tab:3} to highlight the spatial error in $ E(h,\tau) $.
For these obtained numerical results in Tables \ref{tab:1}--\ref{tab:3}, it clearly
indicates that the convergence rate is  consistent with our expected second-order accuracy. At the same time, these tabular results also confirm the validity of our numerical theory.
\begin{table}[h!]
  \begin{center}
    \caption{Errors and convergence rates  for Example 1 with $h=\tau$.}
      \label{tab:1}
    \begin{tabular}{lcclcc}
        \hline
     \multirow{2}{*}{$\tau$}$\quad\quad$ & \multicolumn{2}{c}{\textbf{$a=1,b=0.2$}}& \multicolumn{3}{c}{\textbf{$a=2,b=3$}}\\
    \cline{2-3}  \cline{5-6}
       & \textbf{$E(h,\tau)$}$\quad\quad$ & \textbf{$
rate_t$}$\quad\quad$& & \textbf{$E(h,\tau)$}$\quad\quad$ & \textbf{$
rate_t$}\\
      \hline		
      1/8$\quad\quad$ & 5.170190e-03$\quad\quad$ & $\quad\quad$&
          &8.902904e-03$\quad\quad$& \\
      1/16$\quad\quad$ & 1.273184e-03$\quad\quad$ & 2.0218$\quad\quad$&
           &2.258402e-03$\quad\quad$&1.9790\\
     1/32$\quad\quad$& 3.159761e-04$\quad\quad$ & 2.0106$\quad\quad$&
         &5.687662e-04$\quad\quad$&1.9894\\
      1/64$\quad\quad$ & 7.870948e-05$\quad\quad$ & 2.0052$\quad\quad$&
         &1.427173e-04$\quad\quad$&1.9947\\
      1/128$\quad\quad$ &1.964211e-05$\quad\quad$ & 2.0026$\quad\quad$&
            &3.574537e-05$\quad\quad$&1.9973\\
          \hline
    \end{tabular}
  \end{center}
\end{table}
\begin{table}[h!]
  \begin{center}
    \caption{Errors and convergence rates for Example 1 with $h=\tau/50$}
    \label{tab:2}
     \begin{tabular}{lcclcc}
        \hline
     \multirow{2}{*}{$\tau$}$\quad\quad$ & \multicolumn{2}{c}{\textbf{$a=1,b=0.2$}}& \multicolumn{3}{c}{\textbf{$a=2,b=3$}}\\
    \cline{2-3}  \cline{5-6}
       & \textbf{$E(h,\tau)$}$\quad\quad$ & \textbf{$
rate_t$}$\quad\quad$& & \textbf{$E(h,\tau)$}$\quad\quad$ & \textbf{$
rate_t$}\\
      \hline		
      1/8$\quad\quad$ & 5.041120e-03$\quad\quad$ & $\quad\quad$&
          &9.171536e-03$\quad\quad$ & \\
      1/16$\quad\quad$ & 1.256383e-03$\quad\quad$ & 2.0045$\quad\quad$&
           &2.291941e-03$\quad\quad$&2.0006\\
     1/32$\quad\quad$& 3.138308e-04$\quad\quad$ & 2.0012$\quad\quad$&
          &5.729676e-04$\quad\quad$&2.0000\\
      1/64$\quad\quad$ & 7.843837e-05$\quad\quad$ & 2.0004$\quad\quad$&
           &1.432461e-04$\quad\quad$&2.0000\\
      1/128$\quad\quad$ & 1.960803e-05$\quad\quad$ & 2.0001$\quad\quad$&
            &3.581243e-05$\quad\quad$&2.0000\\
          \hline
    \end{tabular}
  \end{center}
\end{table}
\begin{table}[h!]
  \begin{center}
    \caption{Errors and convergence rates for Example 1 with $\tau=h/50$}
    \label{tab:3}
    \begin{tabular}{lcclcc}
        \hline
     \multirow{2}{*}{$h$}$\quad\quad$ & \multicolumn{2}{c}{\textbf{$a=1,b=0.2$}}& \multicolumn{3}{c}{\textbf{$a=2,b=3$}}\\
    \cline{2-3}  \cline{5-6}
       & \textbf{$E(h,\tau)$}$\quad\quad$ & \textbf{$
rate_s$}$\quad\quad$& & \textbf{$E(h,\tau)$}$\quad\quad$ & \textbf{$
rate_s$}\\
      \hline		
      1/8$\quad\quad$ & 1.226607e-03$\quad\quad$ & $\quad\quad$&
          &3.499495e-03$\quad\quad$ & \\
      1/16$\quad\quad$ & 3.060345e-04$\quad\quad$& 2.0029$\quad\quad$&
           &8.754240e-04$\quad\quad$&1.9991\\
     1/32$\quad\quad$& 7.644937e-05$\quad\quad$ & 2.0011$\quad\quad$&
          &2.186399e-04$\quad\quad$&2.0014\\
      1/64$\quad\quad$ & 1.911024e-05$\quad\quad$ & 2.0002$\quad\quad$&
           &5.466917e-05$\quad\quad$&1.9998\\
      1/128$\quad\quad$ & 4.777257e-06$\quad\quad$ & 2.0001$\quad\quad$&
            &1.366647e-05$\quad\quad$&2.0001\\
          \hline
    \end{tabular}
  \end{center}
\end{table}

\textbf{Example 2.}
For the problem (\ref{eqs1_3}), let $K=1, D=20,a=0.01,b=0.1,$ the source function $f(x,t)=100(1+t)e^{-60|x-0.5|}$ and the initial and
boundary value functions $\xi(x)=\eta(x)=0$.

By this example, we compare the numerical solutions obtained by solving the problem under the artificial boundary condition with those obtained based on the Dirichlet boundary conditions. For the considered problem, we assume that the considered problem satisfies homogeneous Dirichlet boundary conditions on space domain $[0,20]$. A numerical scheme is then obtained by replacing (\ref{eqs4_4.5e})--(\ref{eqs4_4.5g}) and (\ref{eqs4_4.5i}) with $\bar{T}_{M_s}^{k-1/2}=0,1\leq k\leq M_t$ in  (\ref{eqn4_5}), and the corresponding problem is numerically solved with $h=\tau=1/64$ (Scheme I).
Concurrently, we establish the artificial boundary based on space domain $[0,1]$
and perform calculations with $h=\tau=1/64$ (Scheme II). Figures \ref{fig:1} and \ref{fig:2} depict the curves of Scheme I and Scheme II at $t=14,16,18,20$, respectively. By observing these figures, we find that the temperature $T$ reaches its peak in the space domain $[0,1]$, and the numerical solutions under Scheme I and Scheme II are almost identical on domain $[0,1]$. Furthermore, from these figures, we also observe that to ensure $T(x_r,t)$ is approximately zero (i.e., to satisfy the homogeneous Dirichlet boundary condition for the considered problem), the computational domain $[0,x_r]$ for the corresponding problem should ideally satisfy $x\geq15$. Consequently, this suggests that when one focuses on temperature $T$ such as its peak value, the artificial boundary method offers a significant advantage due to its efficient computational speed resulting from a smaller computational domain.
\begin{figure}[htbp]
\centering
\includegraphics[width=0.45\columnwidth]{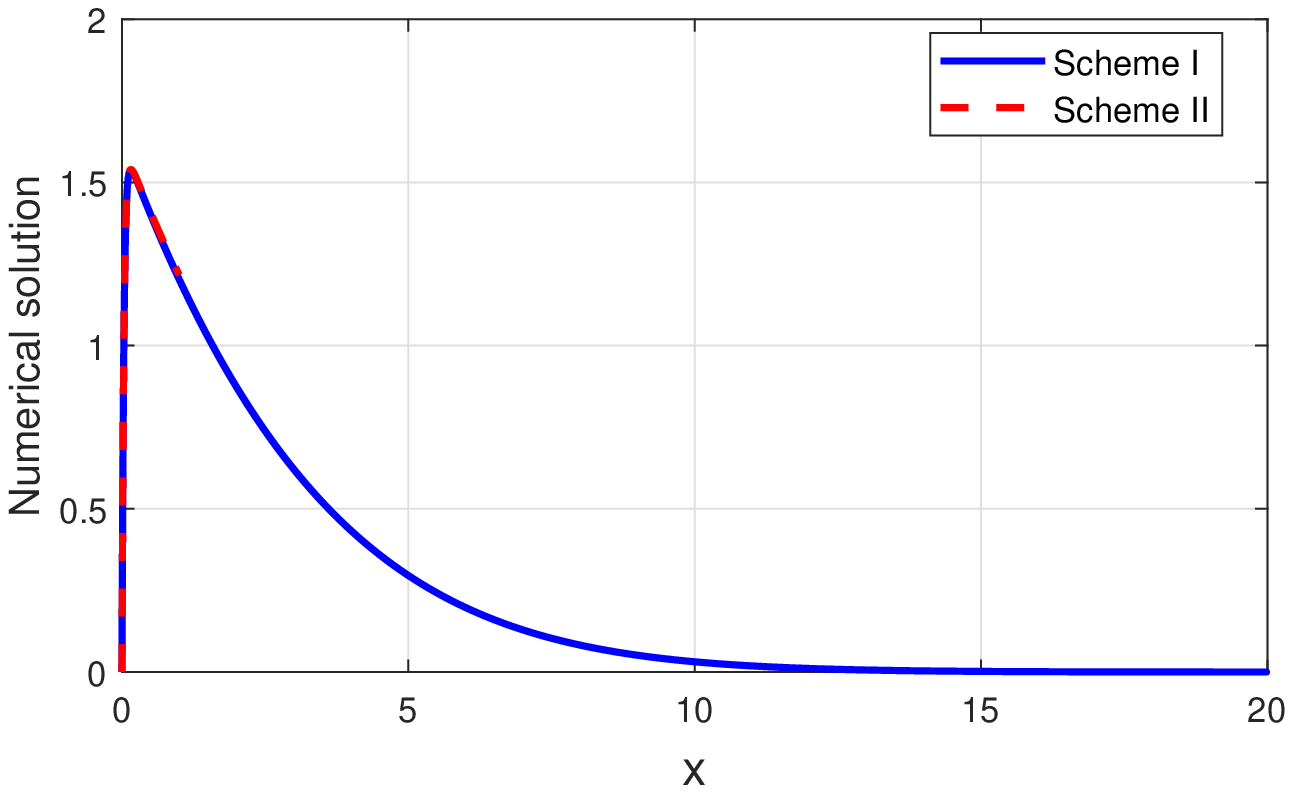}
\includegraphics[width=0.45\columnwidth]{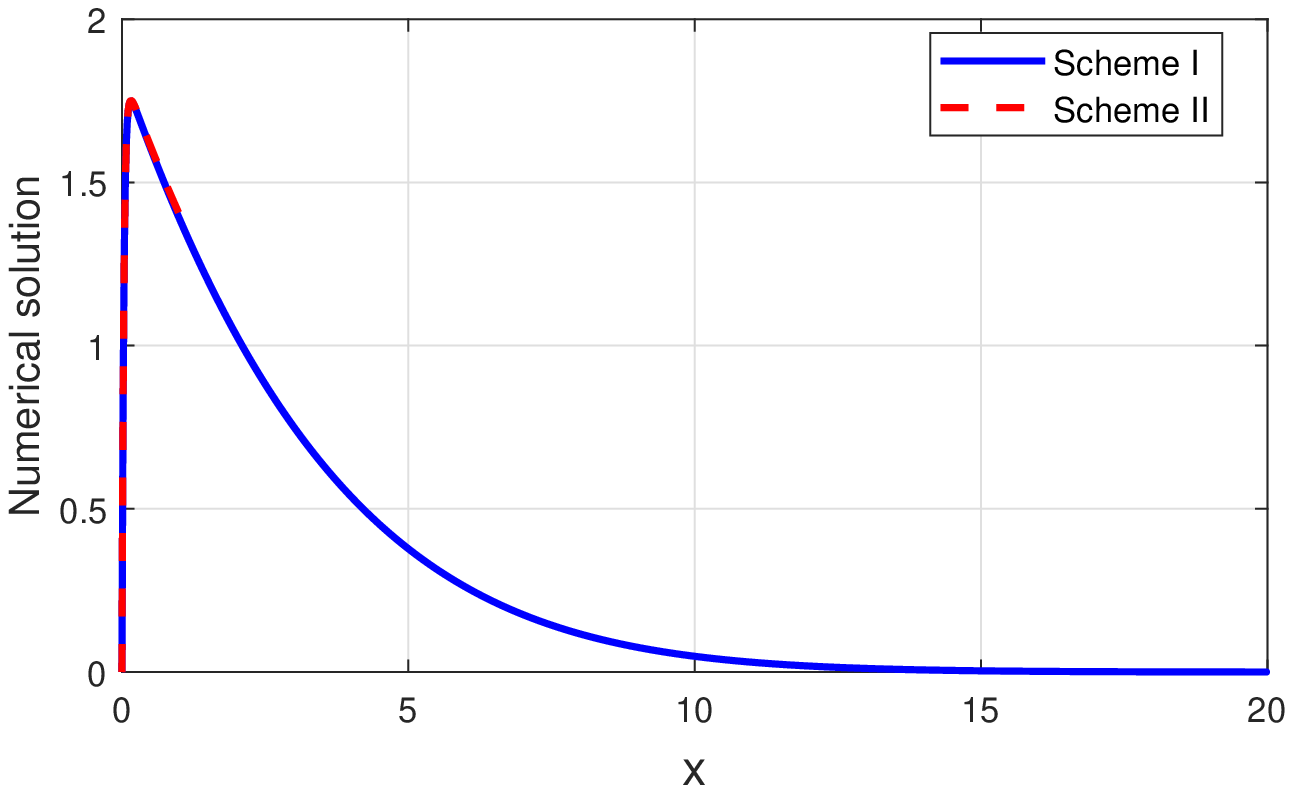}
\caption{Numerical solutions at $t=14$ (left) and $t=16$ (right) for Scheme I and Scheme II with $h=\tau=1/64$.}
\label{fig:1}
\end{figure}
\begin{figure}[htbp]
\centering
\includegraphics[width=0.45\columnwidth]{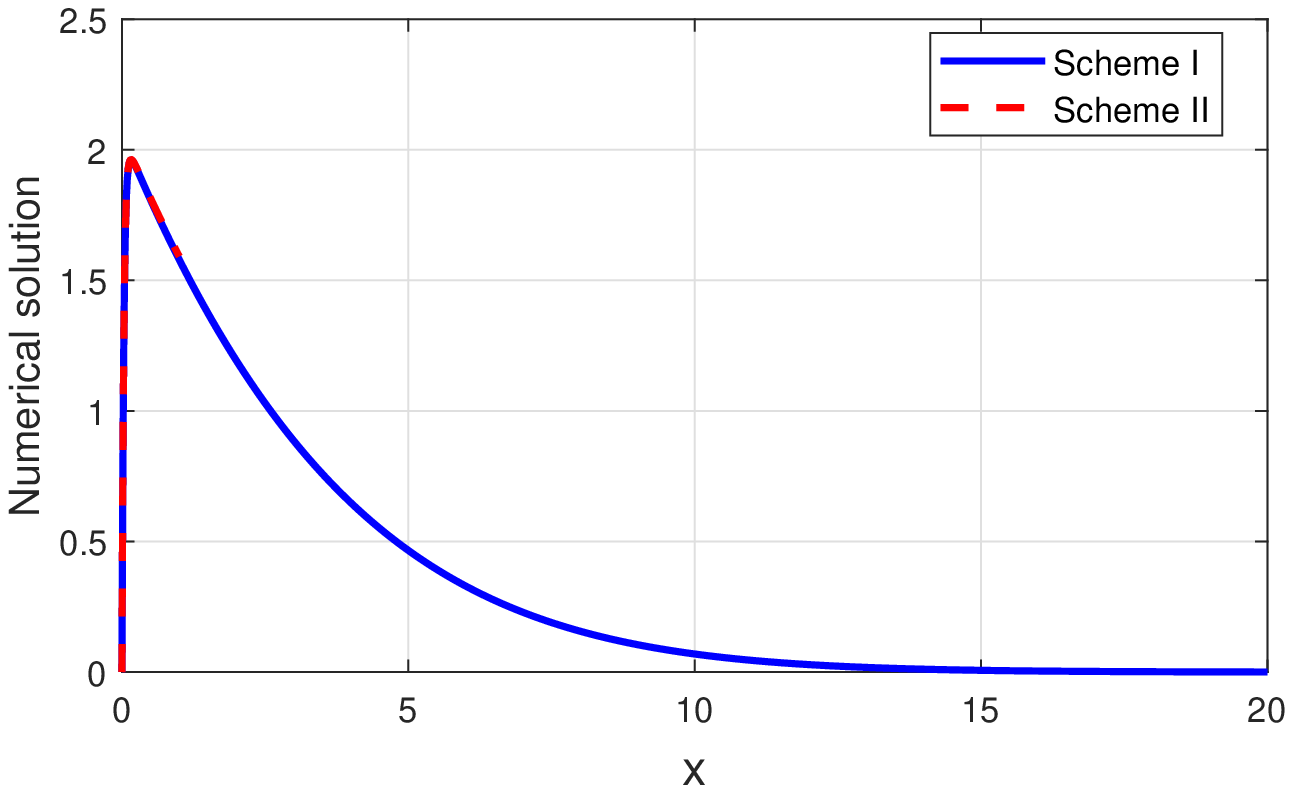}
\includegraphics[width=0.45\columnwidth]{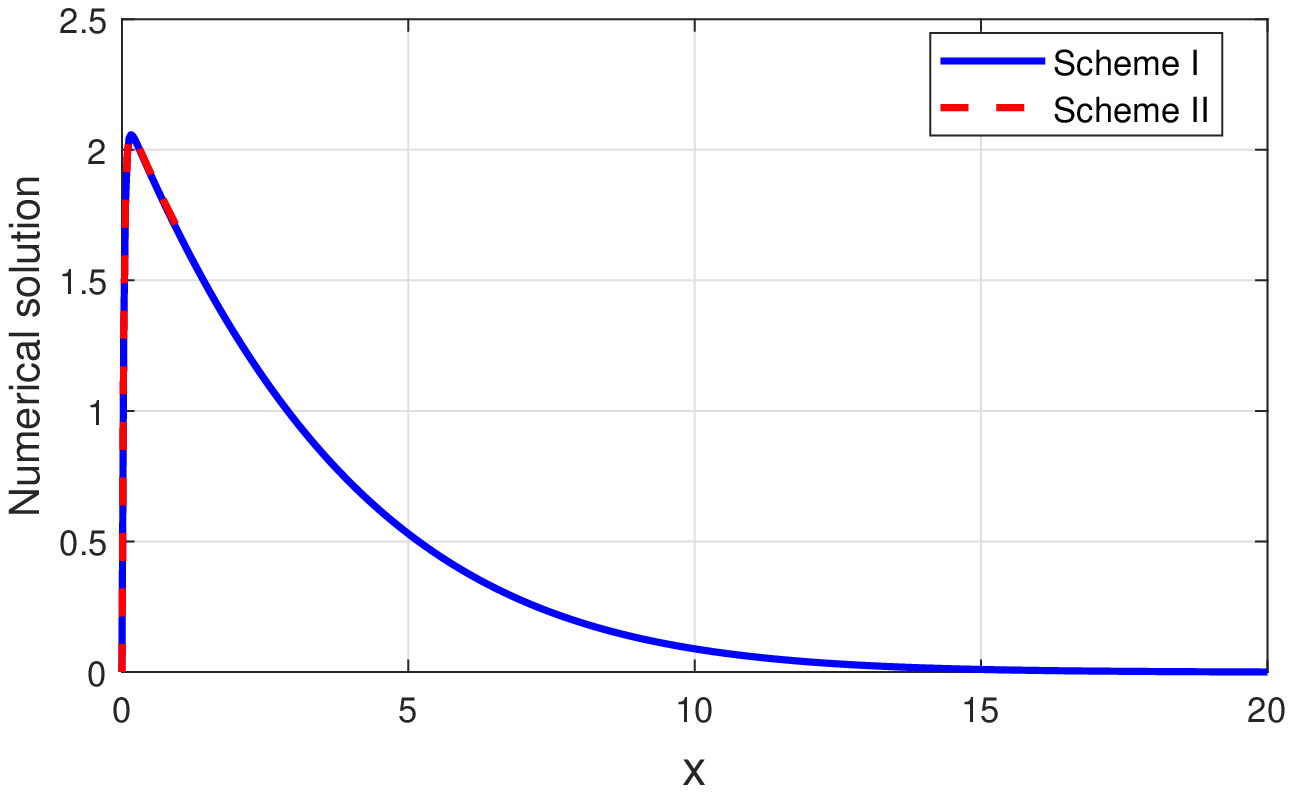}
\caption{Numerical solutions at $t=18$ (left) and $t=20$ (right) for Scheme I and Scheme II with $h=\tau=1/64$.}
\label{fig:2}
\end{figure}

\section{Conclusion}\label{Sec6}
This paper proposes an efficient numerical method for solving a DPL heat conduction equation on an unbounded domain. To transform this problem on the unbounded domain into an initial-boundary value problem on a bounded computational domain, the artificial boundary method is introduced, and high-order local artificial boundary is constructed by utilizing
the Pad\'e approximation and the Laplace transform. Furthermore, for the resulting reduced problem on the bounded computational domain equipped with high-order local artificial boundary, its stability in the $L^2$-norm is discussed. To facilitate the construction of a finite difference scheme for the reduced problem, an auxiliary variable is introduced to reduce the order of the time derivative in the governing equation. Regarding the developed numerical scheme, a rigorous stability and error analysis is conducted, demonstrating its unconditional stability and second-order accuracy in both temporal and spatial directions. Finally, two numerical examples are presented to validate the correctness of the developed numerical theory and the effectiveness of the numerical scheme.

\section*{Acknowledgements}
The first researcher is supported by the Research Foundation of Education Commission of Hunan Province of China (Grant No. 23A0126) and the 111 Project (Grant No. D23017).

\end{document}